\newtheorem{theorem}{Theorem}
\newtheorem{corollary}[theorem]{Corollary}
\journal{}
\begin{document}

\begin{frontmatter}



\title{Regularity analysis and verification of Coons volume mappings}


\author[label_lnu]{Yingying Yu}
\author[label_lnu]{Yashu Liu}
\author[label_lnu]{Jiaxuan Li}
\author[label_dlut]{Xin Li}
\author[label_tud]{Ye Ji\corref{cor1}}
\author[label_dlut]{Chungang Zhu}

\affiliation[label_lnu]{organization={School of Mathematics, Liaoning Normal University},
            city={Dalian},
            postcode={116029}, 
            country={China}}

\affiliation[label_dlut]{organization={School of Mathematical Sciences, Dalian University of Technology},
            city={Dalian},
            postcode={116024},
            country={China}}

\affiliation[label_tud]{organization={Delft Institute of Applied Mathematics, Delft University of Technology},
            city={Delft},
            postcode={2628 CD},
            country={the Netherlands}}

\cortext[cor1]{Corresponding author. Email: y.ji-1@tudelft.nl}

\begin{abstract}
The Coons volume provides a classical approach for constructing three-dimensional parametric mappings via boundary surface interpolation and is widely employed in volumetric mesh generation, computer-aided geometric design, and isogeometric analysis. However, due to curvature variations and continuity limitations of the boundary surfaces, the Jacobian determinant of a Coons volume may locally vanish or become negative, resulting in a non-regular mapping. This undermines mesh quality and compromises the stability of subsequent numerical computations. Ensuring the regularity of Coons volumes is therefore critical for robust parametric modeling.

This paper develops a systematic framework for analyzing and verifying the regularity of Coons volumes. We first derive a general sufficient condition applicable to arbitrary boundary parameterizations, independent of specific analytical forms. For B\'ezier-form Coons volumes, we introduce a criterion based on the B\'ezier coefficients of the Jacobian determinant, transforming the verification problem into checking the positivity of control coefficients. Furthermore, we construct a necessary condition by applying a subdivision strategy combined with the B\'ezier blossoming technique, ensuring that regularity is preserved in all subdomains.

By integrating these conditions, we design an efficient verification algorithm whose correctness and computational performance are validated through numerical experiments. We observe that the regularity of a Coons volume is closely related to the geometric similarity of its opposite boundary surfaces. Moreover, through B\'ezier extraction, the algorithm is extended to multi-patch B-spline volumes of arbitrary topology. Numerical tests show that the method completes regularity verification in milliseconds, enabling real-time application. This work contributes both theoretical and computational tools for quality assurance in volumetric parametric modeling.
\end{abstract}



\begin{keyword}


Coons volume \sep regularity \sep B\'ezier surface/volume \sep computer-aided geometric design \sep isogeometric analysis
\end{keyword}

\end{frontmatter}




\section{Introduction}
\label{sec1:introduction}

In the field of Computer-Aided Geometric Design (CAGD), the Coons volume is a classical and foundational technique for constructing three-dimensional parametric solids through boundary surface interpolation \cite{farin1999discrete}. It plays a vital role in geometric modeling, serving as a bridge between boundary representation (B-rep) and volumetric solid modeling \cite{shi2010filling}. Coons volumes have found wide applications in areas such as structured volume mesh generation \cite{xu2013analysis}, and Isogeometric Analysis (IGA) \cite{hughes2005isogeometric}.

With the rapid development of CAD/CAE integration and increasing demands to reduce product development cycles, the seamless coupling of design and analysis has become a prevailing trend \cite{cottrell2009isogeometric}. In this context, robust and efficient volume mesh generation is essential in engineering and scientific computing. In mechanical design, 3D meshes are indispensable in Finite Element Analysis (FEA), enabling accurate assessment of stress, strain, and deformation under loading conditions. Applications such as the structural analysis of automotive components, turbomachinery blades, aerospace assemblies, and biomedical implants rely heavily on high-quality volumetric parameterizations or meshes to ensure simulation accuracy and numerical robustness \cite{hughes2005isogeometric, zhang2018geometric, pietroni2022hex}.

To generate such meshes, Coons volumes are often used to parameterize structured hexahedral regions \cite{xu2015efficient}. The classical approach involves defining a solid bounded by six parametric surfaces and constructing the interior mapping via surface interpolation, as illustrated in Figure~\ref{fig1:boundary_surfaces_and_Coons_volume}. However, in practice, a single parametric block is often insufficient for complex domains. More critically, ensuring the regularity of the parametric mapping, i.e., maintaining a strictly positive Jacobian determinant throughout the domain, is essential for mesh quality and for the convergence of downstream solvers such as finite element, isogeometric analysis, or computational fluid dynamics methods \cite{zhao2021geometric, yu2021conditions, yu2023sufficient}.

\begin{figure}[H]
    \centering
    \includegraphics[width=0.75\linewidth]{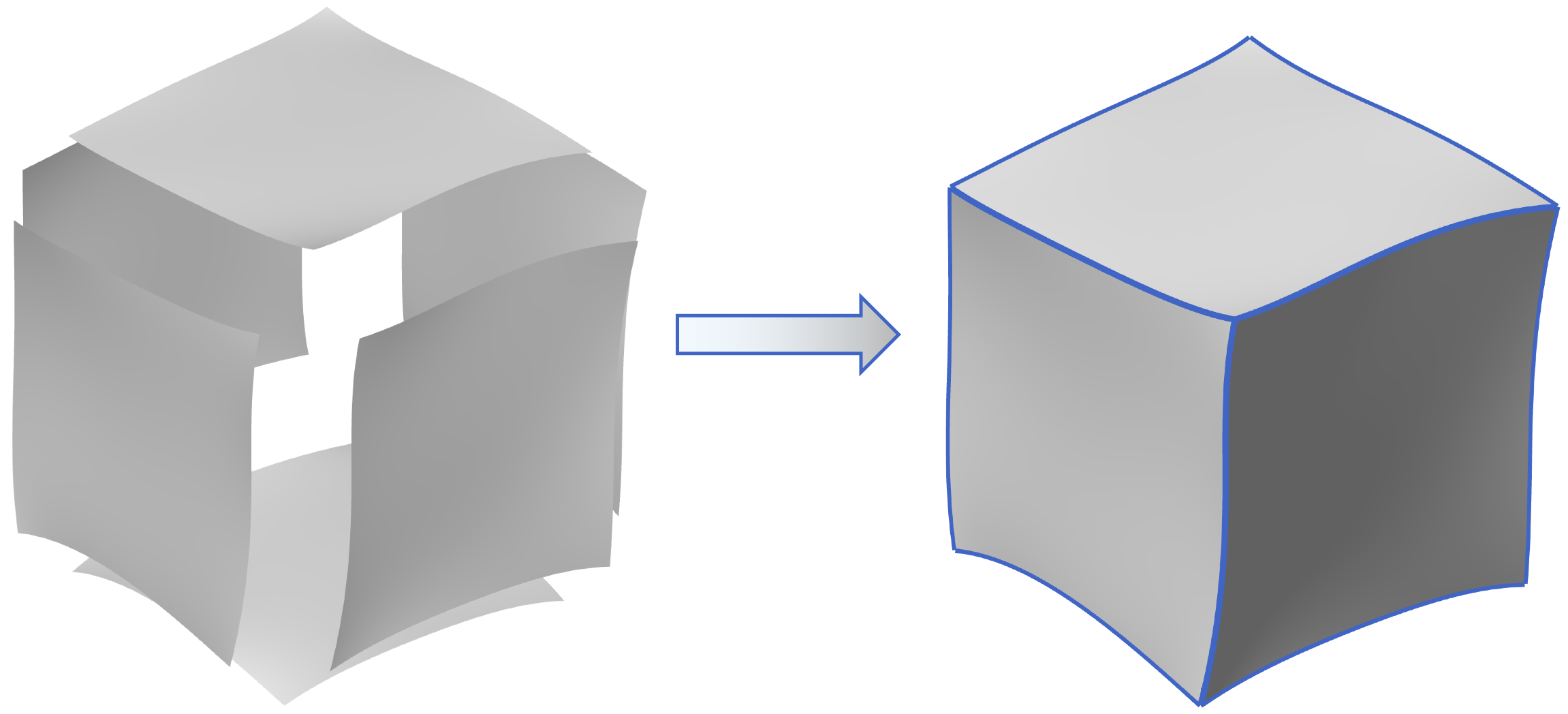}
   \caption{A volumetric domain enclosed by six boundary surfaces and constructed using Coons volume mapping.}
\label{fig1:boundary_surfaces_and_Coons_volume}
\end{figure}

However, due to geometric complexity, the boundary surfaces may exhibit significant curvature variation or lack smooth connections, especially in practical modeling scenarios. Such irregularities may lead to degeneracies or sign changes in the Jacobian determinant, violating the regularity condition. These local singularities may cause low-quality or invalid mesh elements, which in turn severely affect the accuracy and robustness of numerical simulations. Therefore, establishing rigorous regularity criteria for Coons volume mappings under general geometric conditions is of both theoretical and practical importance.

This paper addresses the above challenges by proposing new theoretical results and algorithms for regularity analysis of Coons volumes. The main contributions are as follows:
\begin{itemize}
\item \textbf{Regularity conditions for general Coons volumes (Theorem~\ref{theorem 1}).} 
We derive a sufficient condition for the regularity of Coons volumes bounded by general parametric surfaces. This condition does not rely on the specific functional form of the boundaries or blending functions, ensuring wide applicability. Since regularity is closely tied to curvature variation and similarity between opposite boundaries, our formulation offers a geometric perspective for quality control.

\item \textbf{Regularity conditions for B\'ezier-form Coons volumes (Theorem~\ref{theorem 2}).}
In many practical cases, boundary surfaces are represented in B\'ezier or spline form. For such representations, we reformulate the regularity condition as a positivity test on the B\'ezier coefficients of the Jacobian determinant. This approach improves interpretability and computational efficiency, while accommodating more flexible boundary configurations.

\item \textbf{Necessary conditions via subdivision and blossoming (Theorem~\ref{theorem 3}).}
We propose a necessary condition for regularity based on recursive subdivision and the B\'ezier blossoming algorithm. If a volume is regular, then all its subdomains after subdivision must also be regular. This condition offers a new theoretical tool for localized quality analysis.

\item \textbf{Efficient regularity verification algorithm.}
By combining the sufficient and necessary conditions above, we design an efficient algorithm for verifying regularity in Coons volumes. The algorithm demonstrates robustness across various geometric configurations and is applicable to both single-block and multi-patch volume constructions.
\end{itemize}

The remainder of this paper is organized as follows. Section~\ref{sec2:Theoretical Analysis} presents the theoretical analysis of the regularity conditions for Coons volumes, including general parametric surfaces and B\'ezier representations, along with the derivation of sufficient and necessary conditions. Section~\ref{sec3:numerical experiments} provides numerical experiments that demonstrate the effectiveness, computational efficiency, and practical robustness of the proposed algorithm across different examples. Finally, Section~\ref{sec4:conclusions} summarizes the main contributions of this work and outlines potential directions for future research.

\section{Theoretical analysis}
\label{sec2:Theoretical Analysis}

\subsection{Coons volume mapping}
\label{sec201:Coons Volume Mapping}

Let $\bm{\mathcal{S}}_1(u, v)$, $\bm{\mathcal{S}}_2(u, v)$, $\bm{\mathcal{S}}_3(u, w)$, $\bm{\mathcal{S}}_4(u, w)$, $\bm{\mathcal{S}}_5(v, w)$, and $\bm{\mathcal{S}}_6(v, w)$: $[0,1]^2 \to \mathbb{R}^3$ be six parametric surfaces defined over the unit square $[0,1]^2${\color{red}, as illustrated in Figure~\ref{fig2:notation}}. These surfaces represent the six faces of a hexahedral domain, corresponding to the front/back, left/right, and top/bottom boundaries of a three-dimensional parametric solid (cf. Figure~\ref{fig1:boundary_surfaces_and_Coons_volume}).

To ensure a well-defined volume, adjacent boundary surfaces are required to satisfy the following continuity conditions along their shared edges:
\begin{equation}
\begin{aligned}
  &\bm{\mathcal{S}}_1(0, v) = \bm{\mathcal{S}}_5(v, 0), \quad && \bm{\mathcal{S}}_1(1, v) = \bm{\mathcal{S}}_6(v, 1), \\
  &\bm{\mathcal{S}}_1(u, 0) = \bm{\mathcal{S}}_3(u, 0), \quad && \bm{\mathcal{S}}_1(u, 1) = \bm{\mathcal{S}}_4(u, 0), \\
  &\bm{\mathcal{S}}_2(u, 0) = \bm{\mathcal{S}}_3(u, 1), \quad && \bm{\mathcal{S}}_2(u, 1) = \bm{\mathcal{S}}_4(u, 1), \\
  &\bm{\mathcal{S}}_2(0, v) = \bm{\mathcal{S}}_5(v, 1), \quad && \bm{\mathcal{S}}_2(1, v) = \bm{\mathcal{S}}_6(v, 0), \\
  &\bm{\mathcal{S}}_3(0, w) = \bm{\mathcal{S}}_5(0, w), \quad && \bm{\mathcal{S}}_3(1, w) = \bm{\mathcal{S}}_6(0, w), \\
  &\bm{\mathcal{S}}_4(0, w) = \bm{\mathcal{S}}_5(1, w), \quad && \bm{\mathcal{S}}_4(1, w) = \bm{\mathcal{S}}_6(1, w).
\end{aligned}
\label{eq1:boundary surface}
\end{equation}

\begin{figure}[H]
    \centering
    \includegraphics[width=0.7\linewidth]{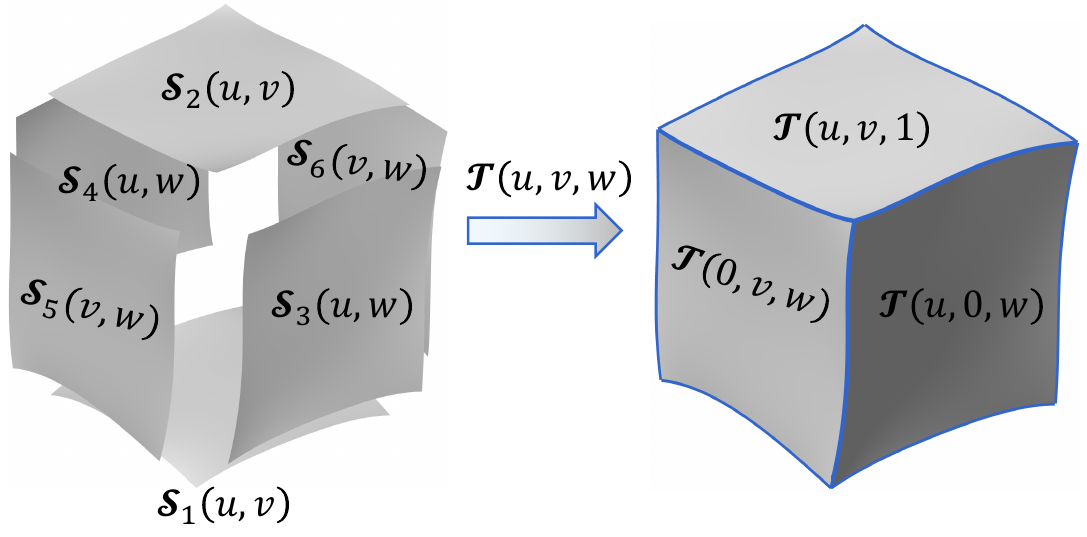}
    \caption{\color{red}Annotated notation of the Coons volume mapping.}
    \label{fig2:notation}
\end{figure}

Assume that aside from the aforementioned shared edges, the six boundary surfaces do not intersect each other. Based on Coons interpolation theory, we define a mapping $\bm{\mathcal{T}} : [0,1]^3 \to \mathbb{R}^3$ from the unit cube to a solid bounded by these six surfaces. The explicit formulation of the Coons volume mapping is given as follows~\cite{farin1999discrete,xu2015efficient}:
\begin{equation}
\begin{aligned}
     \bm{\mathcal{T}}(u,v,w) &= F_0(u)\bm{\mathcal{T}}(0,v,w)+F_1(u) \bm{\mathcal{T}}(1,v,w) + F_0(v) \bm{\mathcal{T}}(u,0,w) + F_1(v) \bm{\mathcal{T}}(u,1,w)\\
     &+F_0(w)\bm{\mathcal{T}}(u,v,0) + F_1(w) \bm{\mathcal{T}}(u,v,1)-[F_0(u), F_1(u)] \begin{bmatrix} \bm{\mathcal{T}}(0,0,w) &\bm{\mathcal{T}}(0,1,w) \\ \bm{\mathcal{T}}(1,0,w) &\bm{\mathcal{T}}(1,1,w) \end{bmatrix} \begin{bmatrix} F_0(v) \\ F_1(v) \end{bmatrix} \\
     &- [F_0(v), F_1(v))] \begin{bmatrix} \bm{\mathcal{T}}(u,0,0) &\bm{\mathcal{T}}(u,0,1) \\ \bm{\mathcal{T}}(u,1,0) &\bm{\mathcal{T}}(u,1,1) \end{bmatrix} \begin{bmatrix} F_0(w) \\ F_1(w) \end{bmatrix} \\
     &- [F_0(w), F_1(w)] \begin{bmatrix} \bm{\mathcal{T}}(0,v,0) & \bm{\mathcal{T}}(1,v,0) \\ \bm{\mathcal{T}}(0,v,1) & \bm{\mathcal{T}}(1,v,1) \end{bmatrix} \begin{bmatrix} F_0(u) \\ F_1(u) \end{bmatrix}\\
     &+ F_0(w)\begin{bmatrix} F_0(u), F_1(u)\end{bmatrix} \begin{bmatrix} \bm{\mathcal{T}}(0,0,0) & \bm{\mathcal{T}}(0,1,0) \\ \bm{\mathcal{T}}(1,0,0) & \bm{\mathcal{T}}(1,1,0) \end{bmatrix} \begin{bmatrix}F_0(v) \\ F_1(v)\end{bmatrix} \\
     &+ F_1(w) \begin{bmatrix}F_0(u), F_1(u)\end{bmatrix} \begin{bmatrix} \bm{\mathcal{T}}(0,0,1) & \bm{\mathcal{T}}(0,1,1) \\\bm{\mathcal{T}}(1,0,1) & \bm{\mathcal{T}}(1,1,1) \end{bmatrix} \begin{bmatrix} F_0(v)\\ F_1(v) \end{bmatrix},
\end{aligned}
\end{equation}
where $F_0(t)$ and $F_1(t)$ are strictly increasing blending functions that satisfy the Kronecker-delta interpolation property
\begin{equation}
    F_i(j) = \delta_{ij}, \quad i,j \in {0,1}, \quad \text{and} \quad F_0(t) + F_1(t) = 1, \quad \forall t \in [0,1].
\end{equation}
The Coons volume $\bm{\mathcal{T}}(u,v,w)$ satisfies the following boundary constraints:
\begin{equation}
\begin{aligned}
    \bm{\mathcal{S}}_1(u, v)&=\bm{\mathcal{T}}(u,v, 0), \quad \bm{\mathcal{S}}_2(u, v)=\bm{\mathcal{T}}(u,v, 1), \quad \bm{\mathcal{S}}_3(u, w)=\bm{\mathcal{T}}(u,0,w) ,  \\
    \bm{\mathcal{S}}_4(u, w)&=\bm{\mathcal{T}}(u, 1,w), \quad \bm{\mathcal{S}}_5(v,w)=\bm{\mathcal{T}}(1,v,w), \quad \bm{\mathcal{S}}_6(v,w)=\bm{\mathcal{T}}(0,v,w).
\end{aligned}
\end{equation}

If the boundary surfaces $\bm{\mathcal{S}}_i$ ($i=1,\dots,6$) are smooth and the blending functions are continuously differentiable, then the resulting Coons volume $\bm{\mathcal{T}}$ is also differentiable.

Common choices for the blending functions include:
\begin{itemize}
    \item Linear blending functions
    \begin{equation}
    \begin{aligned}
        F_0(t)=t, \quad \quad F_1(t)=1-t.
    \end{aligned}
    \end{equation}
    
    \item Cubic B\'{e}zier blending functions
        \begin{equation}
            \begin{aligned}
                F_0(t) &= 2t^3-3t^2+1 = \mathcal{B}^3_0(t)+\mathcal{B}^3_1(t),\\
                F_1(t) &= -2t^3+3t^2  = \mathcal{B}^3_2(t)+\mathcal{B}^3_3(t),\\
            \end{aligned}
        \end{equation}
    where $\mathcal{B}^3_i(t) = \binom{3}{i} t^i (1-t)^{3-i}(i=0,1,2,3)$ represents the cubic Bernstein basis functions.
\end{itemize}

The first-order partial derivatives of the Coons mapping $\bm{\mathcal{T}}$ with respect to the parametric variables $u$, $v$, and $w$ are given by
\begin{equation}
    \begin{aligned}
     \bm{\mathcal{T}}_u(u,v,w) &=  F_1'(u)\bm \lambda_1(v,w) + \bm\phi_1(u,v,w),\\
     \bm{\mathcal{T}}_v(u,v,w) &= F_1'(v) \bm \lambda_2(u,w) + \bm \phi_2(u,v,w),\\
     \bm{\mathcal{T}}_w(u,v,w)&= F_1'(w) \bm \lambda_3(u,v) + \bm \phi_3(u,v,w),
    \end{aligned}
\end{equation}
where
\begin{equation}
    \begin{aligned}
    \bm \lambda_1(v, w)&=\bm{\mathcal{S}}_5(v,w)-\bm{\mathcal{S}}_6(v,w)+F_0(v)[\bm{\mathcal{T}}(0,0,w)-\bm{\mathcal{T}}(1,0,w)]+F_1(v)[\bm{\mathcal{T}}(0,1,w)-\bm{\mathcal{T}}(1,1,w)]\\
    &+F_0(w)[\bm{\mathcal{T}}(0,v,0)-\bm{\mathcal{T}}(1,v,0)]+F_1(w)[\bm{\mathcal{T}}(0,v,1)-\bm{\mathcal{T}}(1,v,1)]-F_0(v)F_0(w)[\bm{\mathcal{T}}(0,0,0)-\bm{\mathcal{T}}(1,0,0)]\\
    &-F_1(v)F_0(w)[\bm{\mathcal{T}}(0,1,0)-\bm{\mathcal{T}}(1,1,0)]-F_0(v)F_1(w)[\bm{\mathcal{T}}(0,0,1)-\bm{\mathcal{T}}(1,0,1)]\\
    &-F_1(v)F_1(w)[\bm{\mathcal{T}}(0,1,1)-\bm{\mathcal{T}}(1,1,1)],\\
     \bm  \lambda_2(u, w)&=\bm{\mathcal{S}}_4(u,w)-\bm{\mathcal{S}}_3(u,w)+F_0(u)[\bm{\mathcal{T}}(0,0,w)-\bm{\mathcal{T}}(0,1,w)]+F_1(u)[\bm{\mathcal{T}}(1,0,w)-\bm{\mathcal{T}}(1,1,w)]\\
     &+F_0(w)[\bm{\mathcal{T}}(u,0,0)-\bm{\mathcal{T}}(u,1,0)]+F_1(w)[\bm{\mathcal{T}}(u,0,1)-\bm{\mathcal{T}}(u,1,1)]-F_0(u)F_0(w)[\bm{\mathcal{T}}(0,0,0)-\bm{\mathcal{T}}(0,1,0)]\\
   &-F_1(u)F_0(w)[\bm{\mathcal{T}}(1,0,0)-\bm{\mathcal{T}}(1,1,0)]-F_0(u)F_1(w)[\bm{\mathcal{T}}(0,0,1)-\bm{\mathcal{T}}(0,1,1)]\\
   &-F_1(u)F_1(w)[\bm{\mathcal{T}}(1,0,1)-\bm{\mathcal{T}}(1,1,1)],\\
    \bm \lambda_3(u,v)&=\bm{\mathcal{S}}_2(u,v)-\bm{\mathcal{S}}_1(u,v)+F_0(u)[\bm{\mathcal{T}}(0,v,0)-\bm{\mathcal{T}}(0,v,1)]+F_1(u)[\bm{\mathcal{T}}(1,v,0)-\bm{\mathcal{T}}(1,v,1)]\\
    &+F_0(v)[\bm{\mathcal{T}}(u,0,0)-\bm{\mathcal{T}}(u,0,1)]+F_1(v)[\bm{\mathcal{T}}(u,1,0)-\bm{\mathcal{T}}(u,1,1)]-F_0(u)F_0(v)[\bm{\mathcal{T}}(0,0,0)-\bm{\mathcal{T}}(0,0,1)]\\
    &-F_1(u)F_0(v)[\bm{\mathcal{T}}(1,0,0)-\bm{\mathcal{T}}(1,0,1)]-F_0(u)F_1(v)[\bm{\mathcal{T}}(0,1,0)-\bm{\mathcal{T}}(0,1,1)]\\
    &-F_1(u)F_1(v)[\bm{\mathcal{T}}(1,1,0)-\bm{\mathcal{T}}(1,1,1)],
    \end{aligned}
     \label{eq:lambda1}
\end{equation}
and
\begin{equation}
    \begin{aligned}
        \bm \phi_1(u,v,w)&=F_0(v)\frac{\partial \bm{\mathcal{S}}_3(u,w)}{\partial u}+F_1(v)\frac{\partial \bm{\mathcal{S}}_4(u,w)}{\partial u}+F_0(w)\frac{\partial \bm{\mathcal{S}}_1(u,v)}{\partial u}+F_1(w)\frac{\partial\bm  {\mathcal{S}}_2(u,v)}{\partial u}\\
        &-F_0(v)F_0(w){\color{red} \partial_u \bm{\mathcal{T}}}(u,0,0) -F_1(v)F_0(w){\color{red} \partial_u \bm{\mathcal{T}}}(u,1,0)\\
        &-F_0(v)F_1(w){\color{red} \partial_u \bm{\mathcal{T}}}(u,0,1)-F_1(v)F_1(w){\color{red} \partial_u \bm{\mathcal{T}}}(u,1,1),\\
        \bm\phi_2(u,v,w)&=F_0(u)\frac{\partial \bm{\mathcal{S}}_5(v,w)}{\partial v}+F_1(u)\frac{\partial \bm{\mathcal{S}}_6(v,w)}{\partial v}+F_0(w)\frac{\partial \bm{\mathcal{S}}_1(u,v)}{\partial v}+F_1(w)\frac{\partial\bm  {\mathcal{S}}_2(u,v)}{\partial v}\\
       &-F_0(u)F_0(w){\color{red} \partial_v \bm{\mathcal{T}}}(0,v,0)-F_1(u)F_0(w){\color{red} \partial_v \bm{\mathcal{T}}}(1,v,0)\\
        &-F_0(u)F_1(w){\color{red} \partial_v \bm{\mathcal{T}}}(0,v,1)-F_1(u)F_1(w){\color{red} \partial_v \bm{\mathcal{T}}}(1,v,1),\\
      \bm  \phi_3(u,v,w)&=F_0(u)\frac{\partial \bm{\mathcal{S}}_5(v,w)}{\partial w}+F_1(u)\frac{\partial \bm{\mathcal{S}}_6(v,w)}{\partial w}+F_0(v)\frac{\partial \bm{\mathcal{S}}_3(u,w)}{\partial w}+F_1(v)\frac{\partial\bm  S_4(u,w)}{\partial w}\\
        &-F_0(u)F_0(v){\color{red} \partial_w \bm{\mathcal{T}}}(0,0,w)-F_1(u)F_0(v){\color{red} \partial_w \bm{\mathcal{T}}}(1,0,w)\\
       &-F_0(u)F_1(v){\color{red} \partial_w \bm{\mathcal{T}}}(0,1,w)-F_1(u)F_1(v){\color{red} \partial_w \bm{\mathcal{T}}}(1,1,w).
    \end{aligned}
     \label{eq:phi1}
\end{equation}


The Coons volume defines a smooth three-dimensional parameterization by interpolating its six boundary surfaces. However, such a mapping is not guaranteed to be regular in general. In the following, we demonstrate that the regularity of the mapping $\bm{\mathcal{T}}$ fundamentally depends on both the geometric variation of the boundary surfaces and their mutual compatibility. In Theorem~\ref{theorem 1}, we provide a rigorous theoretical analysis of how boundary geometry influences mapping regularity, and we establish a sufficient condition that ensures the Jacobian determinant of $\bm{\mathcal{T}}$ remains strictly positive throughout the domain. These results form the theoretical foundation for the construction of regular volumetric parameterizations in practical geometric modeling.

\subsection{Regularity analysis of Coons volume mapping in general form}
\label{sec202:Regularity Analysis of Coons Volume Mapping}

To analyze the regularity of the Coons volume mapping $\bm{\mathcal{T}}$, we first introduce several notations. Let us define
\begin{equation}
\begin{aligned}
    \rho = \max_{t \in [0,1]} |F_1'(t)|,
\end{aligned}
\end{equation}
\begin{equation}
\begin{aligned}
   {\color{red} \ h^1} &= \rho \max_{v, w \in [0,1]^2} \left \| \bm \lambda_1(v, w) \right \|,\\
   {\color{red} \ h^2}  &= \rho \max_{u, w \in [0,1]^2} \left \| \bm \lambda_2(u, w) \right \|,\\
   {\color{red} \ h^3} &= \rho \max_{u, v \in [0,1]^2} \left \| \bm \lambda_3(u, v) \right \|,
\end{aligned}
\end{equation}
and
\begin{equation}
    \begin{aligned}
     F = \max \left \{{\color{red} h^1,h^2,h^3} \right \}.
    \end{aligned}
    \label{eq:maxLambda}
\end{equation}
Let $M$ denote the maximum norm of the blending functions $\bm \phi_1, \bm \phi_2, \bm \phi_3$, i.e., 
\begin{equation}
\begin{aligned}
    M = \max \left \{ \max \| \phi_1(u,v,w)\|, \max \|\phi_2(u,v,w)\|, \max \|\phi_3(u,v,w)\| \right \}.
\end{aligned}
\label{eq:defM}
\end{equation}

\begin{theorem}\label{theorem 1}
Let $\bm{\mathcal{T}}$ be the Coons volume mapping defined over $[0,1]^3$. If there exists a constant $\tau > 0$ such that
\begin{equation}
    \det(\bm \phi_1, \bm \phi_2, \bm \phi_3) \geq \tau > 0,
    \label{eq:detPhi}
\end{equation}
and
\begin{equation}
    F^3 + 3MF^2 + 3FM^2 {\color{red}=0}< \tau,
    \label{eq:F3+3MF2+3FM2}
\end{equation}
then $\bm{\mathcal{T}}$ is regular throughout the domain.
\end{theorem}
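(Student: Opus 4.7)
The plan is to expand the Jacobian determinant $\det(\bm{\mathcal{T}}_u, \bm{\mathcal{T}}_v, \bm{\mathcal{T}}_w)$ by multilinearity using the decomposition of each partial derivative into a ``$\bm\lambda$-part'' and a ``$\bm\phi$-part'' already derived in the preceding section, then control the error by classical norm bounds on $3\times 3$ determinants. First I will write
\begin{equation*}
\bm{\mathcal{T}}_u = \bm a_1 + \bm\phi_1,\qquad \bm{\mathcal{T}}_v = \bm a_2 + \bm\phi_2,\qquad \bm{\mathcal{T}}_w = \bm a_3 + \bm\phi_3,
\end{equation*}
where $\bm a_1 = F_1'(u)\bm\lambda_1(v,w)$, $\bm a_2 = F_1'(v)\bm\lambda_2(u,w)$, $\bm a_3 = F_1'(w)\bm\lambda_3(u,v)$. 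The definitions of $\rho$, $h^1,h^2,h^3$, and $F$ immediately give the uniform bounds $\|\bm a_i\|\le h^i\le F$ on $[0,1]^3$, while \eqref{eq:defM} gives $\|\bm\phi_i\|\le M$.

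Next I will use trilinearity of the determinant to expand
\begin{equation*}
\det(\bm{\mathcal{T}}_u,\bm{\mathcal{T}}_v,\bm{\mathcal{T}}_w) = \det(\bm\phi_1,\bm\phi_2,\bm\phi_3) + R,
\end{equation*}
where $R$ is the sum of the seven remaining triple determinants, each of which contains at least one factor $\bm a_i$. I group these seven terms by the number of $\bm a_i$-factors they contain: three terms with exactly one $\bm a_i$, three with exactly two, and one with all three. Applying Hadamard's inequality $|\det(\bm v_1,\bm v_2,\bm v_3)|\le \|\bm v_1\|\,\|\bm v_2\|\,\|\bm v_3\|$ termwise and inserting the bounds $\|\bm a_i\|\le F$, $\|\bm\phi_i\|\le M$ gives
\begin{equation*}
|R| \le 3 F M^2 + 3 F^2 M + F^3.
\end{equation*}

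Combining this with the hypothesis $\det(\bm\phi_1,\bm\phi_2,\bm\phi_3)\ge \tau$ yields
\begin{equation*}
\det(\bm{\mathcal{T}}_u,\bm{\mathcal{T}}_v,\bm{\mathcal{T}}_w) \ge \tau - \bigl(F^3 + 3F^2 M + 3 F M^2\bigr),
\end{equation*}
which is strictly positive by assumption \eqref{eq:F3+3MF2+3FM2}. Since this bound holds pointwise for all $(u,v,w)\in[0,1]^3$, the mapping $\bm{\mathcal{T}}$ is regular throughout the domain.

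The argument is essentially a perturbation estimate, so no step is genuinely hard; the only point that needs care is the bookkeeping of the seven cross terms and, more importantly, the verification that the norm bounds $\|\bm a_i\|\le F$ really are uniform over $[0,1]^3$, which relies on the definition of $\rho$ as the maximum of $|F_1'|$ and on $h^1,h^2,h^3$ being taken as suprema of $\|\bm\lambda_i\|$ on the closed unit square. Once this is in place, the application of Hadamard's inequality is routine.
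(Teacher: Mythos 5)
Your proposal is correct and follows essentially the same route as the paper's own proof: decompose each partial derivative into its $\bm\lambda$-part and $\bm\phi$-part, expand the Jacobian determinant by trilinearity into eight terms, bound the seven cross terms by $F^3+3MF^2+3FM^2$ using the uniform norm estimates, and conclude positivity from the hypotheses. Your explicit invocation of Hadamard's inequality and the careful handling of absolute values in the remainder $R$ is in fact slightly tidier than the paper's write-up, which states the lower bound without taking absolute values of the cross terms.
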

\begin{proof}
From the definitions and norm estimations, we observe
\begin{equation}
\begin{aligned}
    F_1'(t) \| \bm \lambda_1(v,w) \| \leq F,\quad F_2'(t) \| \bm \lambda_2(u,w) \| \leq F,\quad F_3'(t) \| \bm \lambda_3(u,v) \| \leq F.
\end{aligned}
\label{eq:derivF}
\end{equation}

For notational simplicity, we denote
\begin{equation*}
    \begin{aligned}
       \bm \lambda_1 = \bm{\lambda}_1 (v,w), \quad
       \bm \lambda_2 = \bm \lambda_2(u,w), \quad
       \bm \lambda_3 = \bm \lambda_3(u,v).
    \end{aligned}
\end{equation*}
\begin{equation*}
    \begin{aligned}
        \tilde{\bm{\mathcal{T}}} = \det[\bm{\mathcal{T}}_u, \bm{\mathcal{T}}_v, \bm{\mathcal{T}}_w],\quad
        \Phi = \det [\bm \phi_1, \bm \phi_2, \bm \phi_3].
    \end{aligned}
\end{equation*}

Then, the Jacobian determinant of the Coons volume can be expanded to yield
\begin{equation} \label{det(T)}
\begin{aligned} 
    \tilde{\bm{\mathcal{T}}} &= F_1'(u) F_1'(v) F_1'(w) \det[\bm \lambda_1, \bm \lambda_2, \bm \lambda_3]+ F_1'(u) F_1'(v) \det [\bm \lambda_1, \bm \lambda_2,\bm  \phi_3]+ F_1'(u) F_1'(w) \det [\bm \lambda_1, \bm \phi_2, \bm \lambda_3] \\
    &+F_1'(u) \det [\bm \lambda_1, \bm \phi_2,\bm  \phi_3]+ F_1'(v) F_1'(w) \det [\bm \phi_1, \bm \lambda_2, \bm \lambda_3]+ F_1'(w) \det[\bm \phi_1, \bm \phi_2, \bm \lambda_3]+ F_1'(v) \det[\bm \phi_1,\bm  \lambda_2, \bm \phi_3]+ \Phi.
\end{aligned}
\end{equation}

Applying the triangle inequality and norm estimates (Eq.~\eqref{eq:derivF}), the lower bound estimation of Eq.~\eqref{det(T)} is expressed as
\begin{equation}
    \begin{aligned}
    \tilde{\bm{\mathcal{T}}} &\geq \Phi-(F_1'(u) F_1'(v) F_1'(w) \det[\bm \lambda_1,\bm  \lambda_2, \bm \lambda_3]+ F_1'(u) F_1'(v) \det [\bm \lambda_1, \bm \lambda_2, \bm \phi_3]+ F_1'(u) F_1'(w) \det [\bm \lambda_1,\bm  \phi_2, \bm \lambda_3]+ \\
      &F_1'(u) \det [\bm \lambda_1, \bm \phi_2, \bm \phi_3]+ F_1'(v) F_1'(w) \det [\bm \phi_1, \bm \lambda_2, \bm \lambda_3]+ F_1'(w) \det[\bm \phi_1, \bm \phi_2,\bm  \lambda_3]+ F_1'(v) \det[\bm \phi_1, \bm \lambda_2, \bm \phi_3]). 
    \end{aligned}
\end{equation}
By utilizing Eq.~\eqref{eq:defM} and Eq.~\eqref{eq:derivF}, a further estimation of $\tilde{\bm{\mathcal{T}}}$ demonstrate that 
\begin{equation} 
    \tilde{\bm{\mathcal{T}}} \geq \tau - \left( F^3 + 3MF^2 + 3FM^2 \right).
\end{equation}
Since $\Phi \geq \tau > 0$ and $F^3 + 3MF^2 + 3FM^2 < \tau$, we can {\color{red} obtain}  that 
\begin{equation}
    \det [ \bm{\mathcal{T}}_u, \bm{\mathcal{T}}_v, \bm{\mathcal{T}}_w ] = \tilde{\bm{\mathcal{T}}} > 0, \quad
    \forall (u,v,w) \in [0,1]^3.
\end{equation}
This ensures that the Jacobian determinant is strictly positive, implying the regularity of $\bm{\mathcal{T}}$.
\end{proof}

Building upon the general regularity condition established in Theorem~\ref{theorem 1}, we derive simplified sufficient conditions by considering reduced forms of the parametric volume representation or employing linear approximations to the blending functions \cite{randrianarivony2025deciding}. As an illustrative example, we examine the pair of boundary surfaces $\bm{\mathcal{S}}_5(v, w)$ and $\bm{\mathcal{S}}_6(v, w)$. Let $\bm{\mathcal{A}}(v, w)$ and $\bm{\mathcal{B}}(v, w)$ denote linear interpolation surfaces connecting the corresponding boundary curves of $\bm{\mathcal{S}}_5$ and $\bm{\mathcal{S}}_6$, defined as:
\begin{equation}
    \begin{aligned}
       \bm{\mathcal{A}}(v, w)= F_0(v) \bm{\mathcal{T}}(1,0,w) + F_1(v) \bm{\mathcal{T}}(1,1,w), \\
       \bm{\mathcal{B}}(v, w)= F_0(v) \bm{\mathcal{T}}(0,0,w) + F_1(v) \bm{\mathcal{T}}(0,1,w),
    \end{aligned}
\end{equation}
where $F_0(v)$ and $F_1(v)$ are standard linear blending functions.

Let $\Delta_\delta(v, w)$ and $\Delta_\beta(v, w)$ represent the deviations between the original and interpolated surfaces:
\begin{equation}
\begin{aligned}
    \Delta_\delta(v,w) &= \bm{\mathcal{S}}_5(v, w) - \bm{\mathcal{A}}(v, w), \\
    \Delta\beta(v,w) &= \bm{\mathcal{S}}_6(v, w) - \bm{\mathcal{B}}(v, w).
\end{aligned}
\label{eq:Delta_delta_beta}
\end{equation}

To analyze the derivative compatibility at corners, we introduce three auxiliary vector functions:
\begin{equation}
    \begin{aligned}
     \bm{\mathcal{H}}_1&=\frac{\partial \bm{\mathcal{S}}_3(u,w)}{\partial u}+\frac{\partial \bm{\mathcal{S}}_1(u,v)}{\partial u}-{\color{red} \partial_u \bm{\mathcal{T}}}(u,1,1),\\
     \bm{\mathcal{H}}_2&=\frac{\partial \bm{\mathcal{S}}_1(u,v)}{\partial v}+\frac{\partial \bm{\mathcal{S}}_5(v,w)}{\partial v}-{\color{red} \partial_v \bm{\mathcal{T}}}(1,v,1),\\
     \bm{\mathcal{H}}_3&=\frac{\partial \bm{\mathcal{S}}_5(v,w)}{\partial w}+\frac{\partial \bm{\mathcal{S}}_3(u,w)}{\partial w}-{\color{red} \partial_w \bm{\mathcal{T}}}(1,1,w) .
    \end{aligned}
\end{equation}

\begin{corollary} \label{corollary 1}
Suppose the boundary surfaces $\bm{\mathcal{S}}_i$ ($i=1,2,\ldots,6$) satisfy:
\begin{equation}
    \begin{aligned}
       \bm{\mathcal{S}}_5(v, w)= \bm{\mathcal{S}}_6(v, w)+ \bm{\mathbf{w}_1},\quad
       \bm{\mathcal{S}}_4(u, w) = \bm{\mathcal{S}}_3(u, w) + \bm{\mathbf{w}_2},\quad
       \bm{\mathcal{S}}_2(u,v)= \bm{\mathcal{S}}_1(u,v)+ \bm{\mathbf{w}_3},
    \end{aligned}
    \label{eq:face_shift}
\end{equation}
where $\mathbf{w}_1$, $\mathbf{w}_2$, $\mathbf{w}_3$ are constant vectors, as shown in Figure~\ref{fig2a:translation}, and each pair of opposite surfaces is non-intersecting, i.e.,
\begin{equation*}
    \begin{aligned}
     \text{Im}(\bm{\mathcal{S}}_5) \cap \text{Im}(\bm{\mathcal{S}}_6) = \emptyset, \quad 
     \text{Im}(\bm{\mathcal{S}}_3) \cap \text{Im}(\bm{\mathcal{S}}_4) = \emptyset, \quad
     \text{Im}(\bm{\mathcal{S}}_1)  \cap \text{Im}(\bm{\mathcal{S}}_2) = \emptyset.
    \end{aligned}
\end{equation*}
If there exists a constant $\tau > 0$ such that
\begin{equation}
    \begin{aligned}
     \det [\bm{\mathcal{H}}_1,\bm{\mathcal{H}}_2,\bm{\mathcal{H}}_3] > \tau,
     \end{aligned}
\end{equation}
then the Coons volume mapping $\bm{\mathcal{T}}$ constructed from these boundary surfaces is regular.
\end{corollary}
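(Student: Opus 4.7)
The plan is to reduce Corollary~\ref{corollary 1} to Theorem~\ref{theorem 1} by showing that the translation hypothesis \eqref{eq:face_shift} forces the auxiliary vectors $\bm{\lambda}_i$ to vanish identically and the blending terms $\bm{\phi}_i$ to coincide exactly with $\bm{\mathcal{H}}_i$. Once these two identities are in hand, the constant $F$ from \eqref{eq:maxLambda} is zero, so $F^{3}+3MF^{2}+3FM^{2}=0<\tau$, while the assumed lower bound gives $\det[\bm{\phi}_1,\bm{\phi}_2,\bm{\phi}_3]=\det[\bm{\mathcal{H}}_1,\bm{\mathcal{H}}_2,\bm{\mathcal{H}}_3]>\tau$. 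Both hypotheses of Theorem~\ref{theorem 1} are then satisfied, and the regularity of $\bm{\mathcal{T}}$ follows immediately.

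To verify the first identity, I would substitute directly into \eqref{eq:lambda1}. Using $\bm{\mathcal{T}}(0,v,w)=\bm{\mathcal{S}}_6(v,w)$ and $\bm{\mathcal{T}}(1,v,w)=\bm{\mathcal{S}}_5(v,w)$, every difference of the form $\bm{\mathcal{T}}(0,\cdot,\cdot)-\bm{\mathcal{T}}(1,\cdot,\cdot)$ appearing in the expression for $\bm{\lambda}_1$ equals the constant $-\mathbf{w}_1$, while $\bm{\mathcal{S}}_5-\bm{\mathcal{S}}_6=\mathbf{w}_1$. Invoking the partition-of-unity identities $F_0(v)+F_1(v)=F_0(w)+F_1(w)=1$, the nine terms of $\bm{\lambda}_1$ collapse to $\mathbf{w}_1-\mathbf{w}_1-\mathbf{w}_1+\mathbf{w}_1=\mathbf{0}$. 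The analogous argument with $\mathbf{w}_2$ in place of $\mathbf{w}_1$ gives $\bm{\lambda}_2\equiv\mathbf{0}$, and similarly $\bm{\lambda}_3\equiv\mathbf{0}$, so $F=0$.

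To verify the second identity, I would inspect $\bm{\phi}_1$ in \eqref{eq:phi1}. Differentiating the face translations yields $\partial_u\bm{\mathcal{S}}_4=\partial_u\bm{\mathcal{S}}_3$ and $\partial_u\bm{\mathcal{S}}_2=\partial_u\bm{\mathcal{S}}_1$, so the first four summands reduce by partition of unity to $\partial_u\bm{\mathcal{S}}_3(u,w)+\partial_u\bm{\mathcal{S}}_1(u,v)$. For the correction terms, chaining the edge-continuity identities \eqref{eq1:boundary surface} with the translations shows that each of $\partial_u\bm{\mathcal{T}}(u,0,0)$, $\partial_u\bm{\mathcal{T}}(u,1,0)$, $\partial_u\bm{\mathcal{T}}(u,0,1)$, and $\partial_u\bm{\mathcal{T}}(u,1,1)$ reduces to the common vector $\partial_u\bm{\mathcal{S}}_1(u,0)$. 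Partition of unity then collapses the four correction terms to $-\partial_u\bm{\mathcal{T}}(u,1,1)$, producing exactly $\bm{\mathcal{H}}_1$. The same reasoning, using the translations in the $v$- and $w$-directions, gives $\bm{\phi}_2=\bm{\mathcal{H}}_2$ and $\bm{\phi}_3=\bm{\mathcal{H}}_3$.

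The main obstacle is the bookkeeping in the second step: one must verify carefully that the face translations, together with the correct edge-continuity identities from \eqref{eq1:boundary surface}, force all four corner $u$-derivatives of $\bm{\mathcal{T}}$ to coincide with a single vector, so that the partition-of-unity collapse goes through. Once this reduction is checked in one coordinate direction, the other two follow by symmetric calculations, and the corollary reduces to a direct application of Theorem~\ref{theorem 1} with $F=0$ and $\Phi>\tau$.
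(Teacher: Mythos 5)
Your proposal is correct and follows essentially the same route as the paper: reduce to Theorem~\ref{theorem 1} by showing that the translations force $\bm{\lambda}_1=\bm{\lambda}_2=\bm{\lambda}_3=\mathbf{0}$ (hence $F=0$ and $F^3+3MF^2+3FM^2=0<\tau$) while $\bm{\phi}_i=\bm{\mathcal{H}}_i$ gives $\det[\bm{\phi}_1,\bm{\phi}_2,\bm{\phi}_3]=\det[\bm{\mathcal{H}}_1,\bm{\mathcal{H}}_2,\bm{\mathcal{H}}_3]>\tau$. The only differences are cosmetic: the paper obtains $\bm{\lambda}_1=\mathbf{0}$ via the deviation fields $\Delta_\delta=\Delta_\beta$ rather than your direct substitution, and it asserts $\bm{\phi}_i=\bm{\mathcal{H}}_i$ without the edge-continuity bookkeeping that you (correctly) carry out.
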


\begin{proof}
From Eq.~\eqref{eq:face_shift}, each opposing pair of surfaces is a rigid translation of the other. Consequently, the Coons blending terms defined in Eq.~\eqref{eq:phi1} satisfy:
\begin{equation}
    \begin{aligned}
       \bm \phi_1(u,v,w) = \bm{\mathcal{H}}_1,\quad
       \bm \phi_2(u,v,w) = \bm{\mathcal{H}}_2,\quad 
       \bm \phi_3(u,v,w) = \bm{\mathcal{H}}_3.
    \end{aligned}
\end{equation}
This implies
\begin{equation}
    \begin{aligned}
       \det [\bm \phi_1,\bm  \phi_2,\bm  \phi_3] = \det [\bm{\mathcal{H}}_1,\bm{\mathcal{H}}_2,\bm{\mathcal{H}}_3].
    \end{aligned}
\end{equation}
Meanwhile, from Eq.~\eqref{eq:lambda1} and Eq.~\eqref{eq:Delta_delta_beta}, the deviation vector field $\bm{\lambda}_1(v, w)$ becomes
\begin{equation}
    \begin{aligned}
      \bm{\lambda}_1(v,w)= \Delta_\delta(v,w) - \Delta_\beta(v,w).
     \end{aligned}
\end{equation}

Since $\bm{\mathcal{S}}_5$ and $\bm{\mathcal{S}}_6$ are planar or translations of each other, $\Delta_\delta = \Delta_\beta$, yielding $\bm{\lambda}_1(v, w) = \mathbf{0}$. Similar arguments apply to $\bm \lambda_2(u,w)=0$ and $\bm \lambda_3(u,v)=0$, thus the maximum norm $F = 0$. According to Eq.~\eqref{eq:maxLambda} in Theorem \ref{theorem 1}, it follows that
\begin{equation}
    F^3 + 3MF^2 + 3FM^2 {\color{red}=0} < \tau
\end{equation}
and
\begin{equation*}
  \det [ \bm{\phi}_1, \bm{\phi}_2, \bm{\phi}_3 ] > \tau > 0 . 
\end{equation*}
Therefore, by Theorem~\ref{theorem 1}, the mapping $\bm{\mathcal{T}}$ is regular.
\end{proof}

\begin{figure}[H]
    \centering
    \subfigure[]{
    \includegraphics[width=0.35\linewidth]{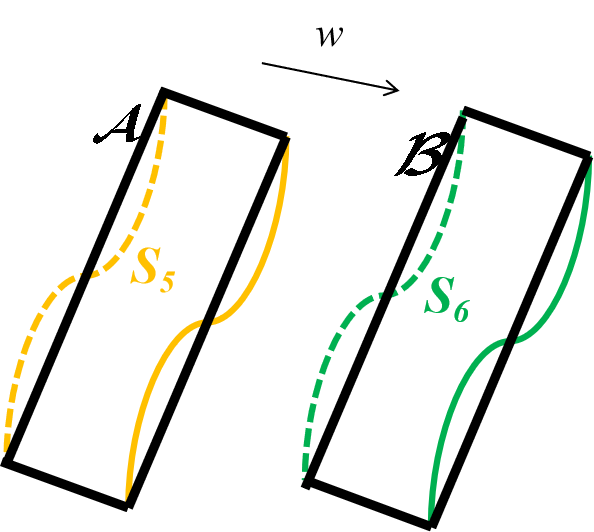}
    \label{fig2a:translation}
    }
    \quad
    \subfigure[]{
    \includegraphics[width=0.35\linewidth]{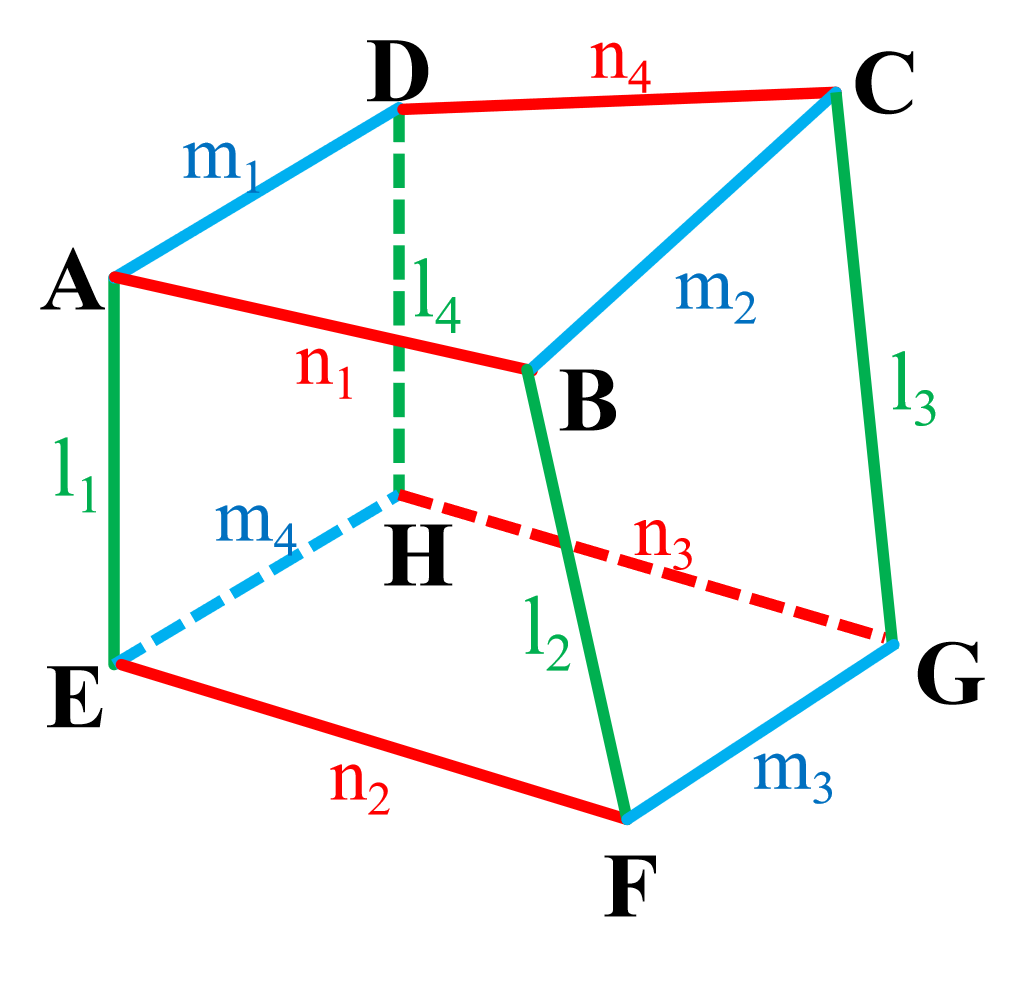}
    \label{fig2b:boundary structure}
    }
    \caption{Relative position of boundary surfaces and the boundary structure of the parametric volume: (a) Translation of a boundary surface. (b) Boundary structure of the parametric volume.}
    \label{fig2}
\end{figure}

\begin{corollary}\label{corollary 2}
Assume that all six boundary surfaces $\bm{\mathcal S}_i$ ($i=1,\ldots,6$) are bilinearly parameterized Coons patches, constructed from linear boundary curves and their corresponding corner vertices. Specifically, each surface admits a unified Coons blending representation of the form
{\color{red}
\begin{equation}
\begin{aligned}
    \bm{\mathcal S}_1(u,v) &= (1-u)n_3+u\,n_2 +(1-v)m_4+v\,m_3 
    -(1-u)(1-v)\mathbf{H} -u(1-v)\mathbf{E} -(1-u)v\mathbf{G} -uv\mathbf{F}, \\
    \bm{\mathcal S}_2(u,v) &= (1-u)n_4+u\,n_1 +(1-v)m_1+v\,m_2 
    -(1-u)(1-v)\mathbf{D}-u(1-v)\mathbf{A} -(1-u)v\mathbf{C} -uv\mathbf{B}, \\
    \bm{\mathcal S}_3(u,w) &= (1-u)l_4+u\,l_1 +(1-w)m_4+w\,m_1 
    -(1-u)(1-w)\mathbf{H}-u(1-w)\mathbf{E}-(1-u)w\mathbf{D}-uw\mathbf{A},\\
    \bm{\mathcal S}_4(u,w) &= (1-u)l_3+u\,l_2 +(1-w)m_3+w\,m_2 
    -(1-u)(1-w)\mathbf{G}-u(1-w)\mathbf{F}-(1-u)w\mathbf{C}-uw\mathbf{B}, \\
    \bm{\mathcal{S}}_5(v,w) &= (1-v)l_1 + v\,l_2 +(1-w)n_2+w\,n_1 
    -(1-v)(1-w)\mathbf{E}-v(1-w)\mathbf{F}-(1-v)w\mathbf{A}-vw\mathbf{B}, \\
    \bm{\mathcal S}_6(v,w) &= (1-v)l_4+v\,l_3 +(1-w)n_3+w\,n_4 
    -(1-v)(1-w)\mathbf{H}-v(1-w)\mathbf{G}-(1-v)w\mathbf{D}-vw\mathbf{C}.
\end{aligned}
\label{eq:S_surface_definition}
\end{equation}
}
Here, $l_i$, $m_i$, and $n_i$ denote linear boundary curves, and  $\mathbf A,\ldots,\mathbf H$ are the corner vertices of the hexahedral domain. The blending functions $F_0(t)$ and $F_1(t)$ are assumed to be linear. Under these assumptions, if one boundary surface, say $\bm{\mathcal S}_j$, forms a parallelogram such that the two boundary edges incident to any adjacent vertex are parallel and equal in length, then the resulting Coons volume mapping $\bm{\mathcal T}$ is regular, i.e., its Jacobian determinant does not vanish within the parameter domain.
\end{corollary}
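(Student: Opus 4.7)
The plan is to apply Theorem~\ref{theorem 1}. Under the stated bilinear-boundary and linear-blending hypotheses, the Coons volume $\bm{\mathcal{T}}$ collapses to the trilinear interpolation of the eight hexahedral corner vertices $\mathbf{A},\dots,\mathbf{H}$. In the first phase of the proof, I would substitute the representations~(\ref{eq:S_surface_definition}) together with the linear blending functions directly into the formulas~(\ref{eq:lambda1}). Each surface difference such as $\bm{\mathcal{S}}_5(v,w)-\bm{\mathcal{S}}_6(v,w)$ then expands into a bilinear combination of corner differences across the $u$-direction; each edge term such as $\bm{\mathcal{T}}(0,0,w)$ is already linear in $w$; and the corner subtraction terms at the tail of each $\bm{\lambda}_i$ precisely compensate the remaining residuals. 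A monomial-by-monomial comparison should yield $\bm{\lambda}_1\equiv\bm{\lambda}_2\equiv\bm{\lambda}_3\equiv\mathbf{0}$, so that $F=0$ in Theorem~\ref{theorem 1} and the inequality $F^{3}+3MF^{2}+3FM^{2}<\tau$ reduces to the single requirement $\det[\bm{\phi}_1,\bm{\phi}_2,\bm{\phi}_3]>0$.

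In the second phase I would exploit the parallelogram hypothesis on $\bm{\mathcal{S}}_j$, which I take (without loss of generality) to be $\bm{\mathcal{S}}_1$. Since $\bm{\lambda}_i\equiv\mathbf{0}$ forces $\bm{\phi}_1=\bm{\mathcal{T}}_u$, $\bm{\phi}_2=\bm{\mathcal{T}}_v$, $\bm{\phi}_3=\bm{\mathcal{T}}_w$, regularity is equivalent to positivity of the trilinear Jacobian. The parallelogram condition on the four corners $\mathbf{E},\mathbf{F},\mathbf{G},\mathbf{H}$ of $\bm{\mathcal{S}}_1$ gives the relation $\mathbf{E}-\mathbf{H}=\mathbf{F}-\mathbf{G}$, which makes $\bm{\mathcal{T}}_u$ and $\bm{\mathcal{T}}_v$ constant on the face $w=0$, equal to fixed edge vectors $\mathbf{e}$ and $\mathbf{f}$. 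Combined with the adjacent-vertex parallel-and-equal clause, which I read as forcing the opposite face $\bm{\mathcal{S}}_2$ to be a rigid translate of $\bm{\mathcal{S}}_1$, these constancy properties propagate in the $w$-direction and yield $\bm{\mathcal{T}}_u\equiv\mathbf{e}$ and $\bm{\mathcal{T}}_v\equiv\mathbf{f}$ throughout the whole cube. The Jacobian then reduces to $\det[\mathbf{e},\mathbf{f},\bm{\mathcal{T}}_w]$, whose strict positivity follows from the non-intersection hypothesis that forces $\bm{\mathcal{T}}_w$ to have a uniform sign in the direction $\mathbf{e}\times\mathbf{f}$.

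The main obstacle I expect is the second phase: precisely translating the informal clause \emph{the two boundary edges incident to any adjacent vertex are parallel and equal in length} into algebraic constraints on the eight corner vertices. Under the stronger reading above, the hexahedron degenerates into a (generalised) parallelepiped and the argument in fact reduces to a direct application of Corollary~\ref{corollary 1} after recognising that the three pairs of opposite surfaces differ by constant translation vectors. Under a literal single-face-parallelogram reading, additional care is required because the trilinear Jacobian on the opposite face remains a priori unconstrained, and one would have to perform a direct sign analysis of $\det[\bm{\phi}_1,\bm{\phi}_2,\bm{\phi}_3]$ as a trilinear polynomial in $(u,v,w)$, exploiting the non-intersection hypothesis to rule out sign changes.
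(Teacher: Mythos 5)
Your first phase matches the paper exactly: substituting the bilinear boundary data and linear blending into Eq.~\eqref{eq:lambda1} gives $\bm{\lambda}_1=\bm{\lambda}_2=\bm{\lambda}_3=\mathbf{0}$, hence $F=0$ and Theorem~\ref{theorem 1} reduces to showing $\det[\bm{\phi}_1,\bm{\phi}_2,\bm{\phi}_3]>0$. The gap is in your second phase. The hypothesis constrains only one face (the paper takes $\bm{\mathcal{S}}_3$ with vertices $\mathbf{A},\mathbf{D},\mathbf{H},\mathbf{E}$) to be a parallelogram, plus one pair of transversal edges $\overrightarrow{\mathbf{GH}}=\overrightarrow{\mathbf{FE}}$; the remaining vertices $\mathbf{B}$ and $\mathbf{C}$ are unconstrained, so the solid is \emph{not} a parallelepiped and the opposite faces are \emph{not} rigid translates of one another. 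Your ``stronger reading'' therefore proves a different and strictly narrower statement---essentially re-deriving Corollary~\ref{corollary 1}---while your ``literal reading,'' which is what the corollary actually asserts, is explicitly left unexecuted (``one would have to perform a direct sign analysis\dots'').

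The missing idea is that no sign analysis of a genuinely trilinear polynomial is needed. After substituting the bilinear data into Eq.~\eqref{eq:phi1}, each $\bm{\phi}_i$ becomes an affine combination of exactly two edge vectors whose scalar coefficients sum to one: $\bm{\phi}_1=(2-v-w)\overrightarrow{\mathbf{EH}}+(v+w-1)\overrightarrow{\mathbf{AD}}$, $\bm{\phi}_2=(1-w+u)\overrightarrow{\mathbf{GH}}+(w-u)\overrightarrow{\mathbf{FE}}$, $\bm{\phi}_3=(1-v+u)\overrightarrow{\mathbf{DH}}+(v-u)\overrightarrow{\mathbf{AE}}$. The parallelogram hypothesis identifies the two vectors within each pair ($\overrightarrow{\mathbf{EH}}=\overrightarrow{\mathbf{AD}}$ and $\overrightarrow{\mathbf{DH}}=\overrightarrow{\mathbf{AE}}$ from the parallelogram face, $\overrightarrow{\mathbf{GH}}=\overrightarrow{\mathbf{FE}}$ from the edge condition), so all eight terms of the expanded determinant carry the same value and the parameter-dependent coefficients collapse to $1$, yielding $\det[\bm{\phi}_1,\bm{\phi}_2,\bm{\phi}_3]\equiv\det[\overrightarrow{\mathbf{EH}},\overrightarrow{\mathbf{GH}},\overrightarrow{\mathbf{DH}}]>0$ independently of $\mathbf{B}$ and $\mathbf{C}$. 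Separately, your claim that non-intersection of opposite faces forces $\bm{\mathcal{T}}_w$ to have a uniform sign along $\mathbf{e}\times\mathbf{f}$ is asserted without justification and is not the mechanism the result relies on.
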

\begin{proof}
Let the eight corner points of the parametric volume be denoted by $\mathbf{A}, \mathbf{B}, \mathbf{C}, \mathbf{D}, \mathbf{E}, \mathbf{F}, \mathbf{G}, \mathbf{H}$, and let the twelve edge vectors be represented by $l_i$, $n_j$, and $m_k$ for $i,j,k = 1,2,3,4$. Assume that the left boundary surface $\bm{\mathcal{S}}_3$, defined by the vertices $\mathbf{A}, \mathbf{D}, \mathbf{H}, \mathbf{E}$, forms a parallelogram. Furthermore, suppose the vectors $\overrightarrow{\mathbf{G}\mathbf{H}}$ and $\overrightarrow{\mathbf{F}\mathbf{E}}$ are parallel and of equal length, i.e., $\overrightarrow{\mathbf{G}\mathbf{H}} = \overrightarrow{\mathbf{F}\mathbf{E}}$, as illustrated in Figure~\ref{fig2b:boundary structure}.

Combining with Eq.~\eqref{eq:lambda1}, the expressions for the blending terms $\bm{\lambda}_i$ can be written as:
\begin{equation}
  \begin{aligned}
   \bm  \lambda_1(v, w)&=\bm{\mathcal{S}}_5(v,w)-\bm{\mathcal{S}}_6(v,w)+(1-v)(l_4-l_1)+v(l_3-l_2)+(1-w)(n_3-n_2)+w(n_4-n_1)\\
   &-(1-v)(1-w)(\mathbf{H}-\mathbf{E})-v(1-w)(\mathbf{G}-\mathbf{F})-(1-v)w(\mathbf{D}-\mathbf{A})-vw(\mathbf{C}-\mathbf{B}),  \\
   \bm  \lambda_2(u, w)&=\bm{\mathcal{S}}_4(u,w)-\bm{\mathcal{S}}_3(u,w)+(1-u)(l_4-l_3)+u(l_1-l_2)+(1-w)(m_4-m_3)+w(m_1-m_2)\\
   &-(1-u)(1-w)(\mathbf{H}-\mathbf{G})-u(1-w)(\mathbf{E}-\mathbf{F})-(1-u)w(\mathbf{D}-\mathbf{C})-uw(\mathbf{A}-\mathbf{B}),\\
    \bm \lambda_3(u,v)&=\bm{\mathcal{S}}_2(u,v)-\bm{\mathcal{S}}_1(u,v)+(1-u)(n_3-n_4)+u(n_2-n_1)+(1-v)(m_4-m_1)+v(m_3-m_2)\\
    &-(1-u)(1-v)(\mathbf{H}-\mathbf{D})-u(1-v)(\mathbf{E}-\mathbf{A})-(1-u)v(\mathbf{G}-\mathbf{C})-uv(\mathbf{F}-\mathbf{B}).
    \label{linear parameterization expressions}
    \end{aligned}
\end{equation}
Assuming that all six boundary surfaces $\bm{\mathcal{S}}_i$ ($i = 1,2,\ldots,6$) are bi-linearly parameterized and {\color{red} admit the same bilinear Coons--blending structure
as defined in Eq.~\eqref{eq:S_surface_definition}}, it follows directly that
\begin{equation}
\begin{aligned}
    \bm{\lambda}_1(v, w)= \bm{\lambda}_2(u, w)= \bm{\lambda}_3(u, v)=0.
\end{aligned}
\end{equation}
Hence, from the definition of the maximum norm $F$ in Eq.~\eqref{eq:maxLambda}, we have $F = 0$ and
\begin{equation}
    F^3 + 3MF^2 + 3FM^2 = 0 < \tau.
\end{equation}
which satisfies the sufficient condition~\eqref{eq:F3+3MF2+3FM2} in Theorem~\ref{theorem 1}.

To verify the positivity of the determinant involving the blending terms $\bm{\phi}_i$ ($i = 1,2,3$), we express the edge vectors of the boundary curves in terms of vertex differences:
\begin{equation}
\begin{aligned}
    l_1 &= \mathbf{A} -\mathbf{E} ,& l_2 &= \mathbf{B} - \mathbf{F} ,& l_3 &= \mathbf{C} - \mathbf{G} ,& l_4 &= \mathbf{D} - \mathbf{H}, \\
    n_1 &= \mathbf{B} -\mathbf{A} ,  & n_2 &= \mathbf{F} - \mathbf{E},  & n_3 &= \mathbf{G} - \mathbf{H},  & n_4 &= \mathbf{C} - \mathbf{D}, \\
    m_1 &= \mathbf{A}-\mathbf{D},   & m_2 &= \mathbf{B} - \mathbf{C},  & m_3 &= \mathbf{F} - \mathbf{G} , & m_4 &=\mathbf{E}  - \mathbf{H}.
\end{aligned}
\label{eq:defL}
\end{equation}
Substituting these into the expressions for $\bm{\mathcal{S}}_i$ and simplifying, we obtain
\begin{equation}
    \begin{aligned} 
     \bm{\mathcal{S}}_1(u,v)&=(1-v-2u+uw)\mathbf{E}+(u+v-uv)\mathbf{F}+(2u+2v-uv-3)\mathbf{H}+(1-u-2v+uv)\mathbf{G}, \\
     \bm{\mathcal{S}}_2(u,v)&=(1-v-2u+uw)\mathbf{A}+(u+v-uv)\mathbf{B}+(2u+2v-uv-3)\mathbf{D}+(1-u-2v+uv)\mathbf{C},\\
      \bm{\mathcal{S}}_3(u,w)&=(1-w-2u+uw)\mathbf{E}+(u+w-uw)\mathbf{A}+(2u+2w-uw-3)\mathbf{H}+(1-u-2w+uw)\mathbf{D}, \\
     \bm{\mathcal{S}}_4(u,w)&=(1-w-2u+uw)\mathbf{F}+(u+w-uw)\mathbf{B}+(2u+2w-uw-3)\mathbf{G}+(1-u-2w+uw)\mathbf{C}, \\
     \bm{\mathcal{S}}_5(v,w)&=(1-v-2w+vw)\mathbf{A}+(v+w-vw)\mathbf{B}+(2v+2w-vw-3)\mathbf{E}+(1-w-2v+vw)\mathbf{F},\\
     \bm{\mathcal{S}}_6(v,w)&=(1-v-2w+vw)\mathbf{D}+(v+w-vw)\mathbf{C}+(2v+2w-vw-3)\mathbf{H}+(1-w-2v+vw)\mathbf{G}.
      \end{aligned}
      \label{eq:S5linear}
\end{equation}
Then, taking appropriate boundary values of $0$ or $1$ for the parameter $u,v,w$ in  $\bm{\mathcal{S}}_i$ ($i=1,...,6$), respectively, the relationship between the tangent directions of the boundary curves and the vectors formed by the vertices of the parameter volume can be obtained, i.e.,

\begin{equation}
    \begin{alignedat}{4}
       {\color{red} \partial_u \bm{\mathcal{T}}}(u,0,0)   &= \overrightarrow{\mathbf{GF}} + 2\overrightarrow{\mathbf{EH}}, 
            \quad & {\color{red} \partial_u \bm{\mathcal{T}}}(u,1,0) &=   {\color{red} \partial_u \bm{\mathcal{T}}}(u,0,1) = \overrightarrow{\mathbf{EH}} = \overrightarrow{\mathbf{CB}} + 2\overrightarrow{\mathbf{FG}}, 
            \quad &  {\color{red} \partial_u \bm{\mathcal{T}}}(u,1,1) &= \overrightarrow{\mathbf{GF}} = \overrightarrow{\mathbf{DA}}, \\
        {\color{red} \partial_v \bm{\mathcal{T}}}(0,v,0)   &= \overrightarrow{\mathbf{DC}} + 2\overrightarrow{\mathbf{GH}}, 
            \quad &  {\color{red} \partial_v \bm{\mathcal{T}}} (1,v,0) &=  {\color{red} \partial_v \bm{\mathcal{T}}}(0,v,1) = \overrightarrow{\mathbf{GH}} = \overrightarrow{\mathbf{AB}} + 2\overrightarrow{\mathbf{FE}}, 
            \quad &  {\color{red} \partial_v \bm{\mathcal{T}}}(1,v,1) &= \overrightarrow{\mathbf{FE}} = \overrightarrow{\mathbf{CD}}, \\
       {\color{red} \partial_w \bm{\mathcal{T}}}(0,0,w)   &= \overrightarrow{\mathbf{GC}} + 2\overrightarrow{\mathbf{DH}} ,
            \quad &  {\color{red} \partial_w \bm{\mathcal{T}}}(0,1,w) &=   {\color{red} \partial_w \bm{\mathcal{T}}}(1,0,w) = \overrightarrow{\mathbf{DH}} = \overrightarrow{\mathbf{FB}} + 2\overrightarrow{\mathbf{CG}}, 
            \quad &  {\color{red} \partial_w \bm{\mathcal{T}}}(1,1,w) &= \overrightarrow{\mathbf{CG}} = \overrightarrow{\mathbf{AE}}.
    \end{alignedat}
    \label{eq:S1-6}
\end{equation}

By computing partial derivatives for all boundary surfaces $\bm{\mathcal{S}}_i$ $(i=1, ...,6)$, we obtain
\begin{equation}
    \begin{alignedat}{2}
        \frac{\partial \bm{\mathcal{S}}_1(u,v)}{\partial u} &= (1-v)\overrightarrow{\mathbf{GF}}+(2-v) \overrightarrow{\mathbf{EH}},
        &\quad
        \frac{\partial \bm{\mathcal{S}}_1(u,v)}{\partial v} &= (1-u)\overrightarrow{\mathbf{EF}}+(2-u) \overrightarrow{\mathbf{GH}},\\
        \frac{\partial \bm{\mathcal{S}}_2(u,v)}{\partial u} &= (1-v)\overrightarrow{\mathbf{CB}}+(2-v) \overrightarrow{\mathbf{AD}},
        &\quad
        \frac{\partial \bm{\mathcal{S}}_2(u,v)}{\partial v} &= (1-u)\overrightarrow{\mathbf{AB}}+(2-u) \overrightarrow{\mathbf{CD}},\\
        \frac{\partial \bm{\mathcal{S}}_3(u,w)}{\partial u} &= (1-w)\overrightarrow{\mathbf{DA}}+(2-w) \overrightarrow{\mathbf{EH}},
        &\quad
        \frac{\partial \bm{\mathcal{S}}_3(u,w)}{\partial w} &= (1-u)\overrightarrow{\mathbf{EA}}+(2-u) \overrightarrow{\mathbf{DH}},\\
        \frac{\partial \bm{\mathcal{S}}_4(u,w)}{\partial u} &= (1-w)\overrightarrow{\mathbf{CB}}+(2-w) \overrightarrow{\mathbf{FG}},
        &\quad
        \frac{\partial \bm{\mathcal{S}}_4(u,w)}{\partial w} &= (1-u)\overrightarrow{\mathbf{FB}}+(2-u) \overrightarrow{\mathbf{CG}},\\
        \frac{\partial \bm{\mathcal{S}}_5(v,w)}{\partial v} &= (1-w)\overrightarrow{\mathbf{AB}}+(2-w) \overrightarrow{\mathbf{FE}},
        &\quad
        \frac{\partial \bm{\mathcal{S}}_5(v,w)}{\partial w} &= (1-v)\overrightarrow{\mathbf{FB}}+(2-v) \overrightarrow{\mathbf{AE}},\\
        \frac{\partial \bm{\mathcal{S}}_6(v,w)}{\partial v} &= (1-w)\overrightarrow{\mathbf{DC}}+(2-w) \overrightarrow{\mathbf{GH}},
        &\quad
        \frac{\partial \bm{\mathcal{S}}_6(v,w)}{\partial w} &= (1-v)\overrightarrow{\mathbf{GC}}+(2-v) \overrightarrow{\mathbf{DH}}.
    \end{alignedat}
    \label{eq:parS}
\end{equation}

Substituting Eq.~\eqref{eq:S1-6} and Eq.~\eqref{eq:parS} into Eq.~\eqref{eq:phi1}, the blending terms $\bm \phi_i(u,v,w)(i=1,2,3)$ can be simplified as
\begin{equation}
    \begin{aligned}
       \bm  \phi_1(u,v,w)&=(2-v-w)\overrightarrow{\mathbf{EH}}+(v+w-1)\overrightarrow{\mathbf{AD}},\\
        \bm \phi_2(u,v,w)&=(1-w+u)\overrightarrow{\mathbf{GH}}+(w-u)\overrightarrow{\mathbf{FE}},\\
      \bm  \phi_3(u,v,w)&=(1-v+u)\overrightarrow{\mathbf{DH}}+(v-u)\overrightarrow{\mathbf{AE}}.
    \end{aligned}
    \label{eq:phi2_3_linear}
\end{equation}

Using the fact that the Coons volume $\bm{\mathcal{T}}$ is convex and that the left face $\bm{\mathcal{S}}_3$ is a parallelogram with $\overrightarrow{\mathbf{GH}} = \overrightarrow{\mathbf{FE}}$, we compute the determinant
\begin{equation}
\begin{aligned}
  \det[\bm\phi_1, \bm\phi_2, \bm\phi_3]&=(2-v-w)(1-w+u)(1-v+u) \det[\overrightarrow{\mathbf{EH}},\overrightarrow{\mathbf{GH}},\overrightarrow{\mathbf{DH}}]\\
  &+(2-v-w)(1-w+u)(v-u) \det[\overrightarrow{\mathbf{EH}},\overrightarrow{\mathbf{GH}},\overrightarrow{\mathbf{AE}}]\\
   &+(2-v-w)(w-u)(1-v+u)\det[\overrightarrow{\mathbf{EH}},\overrightarrow{\mathbf{FE}},\overrightarrow{\mathbf{DH}}]\\
   &+(2-v-w)(w-u)(v-u)\det[\overrightarrow{\mathbf{EH}},\overrightarrow{\mathbf{FE}},\overrightarrow{\mathbf{AE}}]\\
   &+(v+w-1)(1-w+u)(1-v+u) \det[\overrightarrow{\mathbf{AD}},\overrightarrow{\mathbf{GH}},\overrightarrow{\mathbf{DH}}]\\
  &+(v+w-1)(1-w+u)(v-u) \det[\overrightarrow{\mathbf{AD}},\overrightarrow{\mathbf{GH}},\overrightarrow{\mathbf{AE}}]\\
   &+(v+w-1)(w-u)(1-v+u)\det[\overrightarrow{\mathbf{AD}},\overrightarrow{\mathbf{FE}},\overrightarrow{\mathbf{DH}}]\\
   &+(v+w-1)(w-u)(v-u)\det[\overrightarrow{\mathbf{AD}},\overrightarrow{\mathbf{FE}},\overrightarrow{\mathbf{AE}}]\\
   &= \det[\overrightarrow{\mathbf{EH}},\overrightarrow{\mathbf{GH}},\overrightarrow{\mathbf{DH}}]>0.
\end{aligned}
\end{equation}
This confirms that the regularity condition in Eq.~\eqref{eq:detPhi} of Theorem~\ref{theorem 1} is satisfied. Therefore, the Coons volume mapping $\bm{\mathcal{T}}$ is regular.
\end{proof}

In Corollaries~\ref{corollary 1} and~\ref{corollary 2}, we establish sufficient conditions under which the Coons volume mapping $\bm{\mathcal{T}}$ maintains regularity, based on specific geometric configurations and linear boundary conditions. Corollary~\ref{corollary 1} shows that if each pair of opposite boundary surfaces is related by a strict translation and their associated derivative structures are consistent, then the resulting Coons volume $\bm{\mathcal{T}}$ remains regular across the entire parametric domain. Corollary~\ref{corollary 2} further asserts that when both the boundary surfaces and the blending functions are linearly parameterized, the Coons mapping naturally satisfies the condition of a strictly positive Jacobian determinant throughout the domain, thereby guaranteeing its regularity and invertibility. These theoretical results offer both geometric intuition and practical criteria for analyzing the structural regularity of Coons parametric volumes. {\color{red} A similar linear condition has been derived in~\cite{kosinka2010injective} in the context of cage-based free-form deformation.}

To validate these theoretical findings, we designed two representative test cases corresponding to the configurations described in Corollaries~\ref{corollary 1} and~\ref{corollary 2}, respectively. Figure~\ref{fig:coons-jacobian-regularity} illustrates the deformation of the volume grid and the spatial distribution of the Jacobian determinant for the constructed Coons mappings under the respective geometric settings. It is observed that when the geometric conditions specified in either Corollary~\ref{corollary 1} or Corollary~\ref{corollary 2} are satisfied, the Coons volume mapping $\bm{\mathcal{T}}$ remains smooth and free from self-intersections, with the Jacobian determinant strictly positive across the domain. These numerical results provide evidence supporting the theoretical regularity conditions established in both corollaries.

\begin{figure}[H]
    \centering
    \subfigure[]{\includegraphics[width=0.25\linewidth]{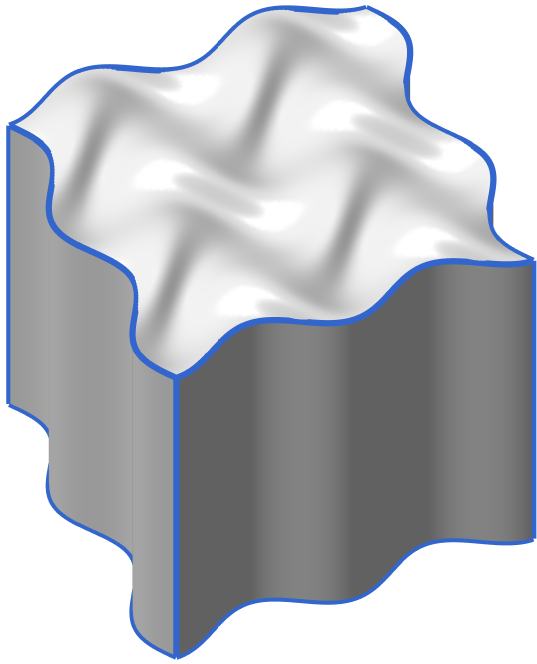}
    \label{fig4a_similarityExample}}
    \quad
    \subfigure[]{\includegraphics[width=0.35\linewidth]{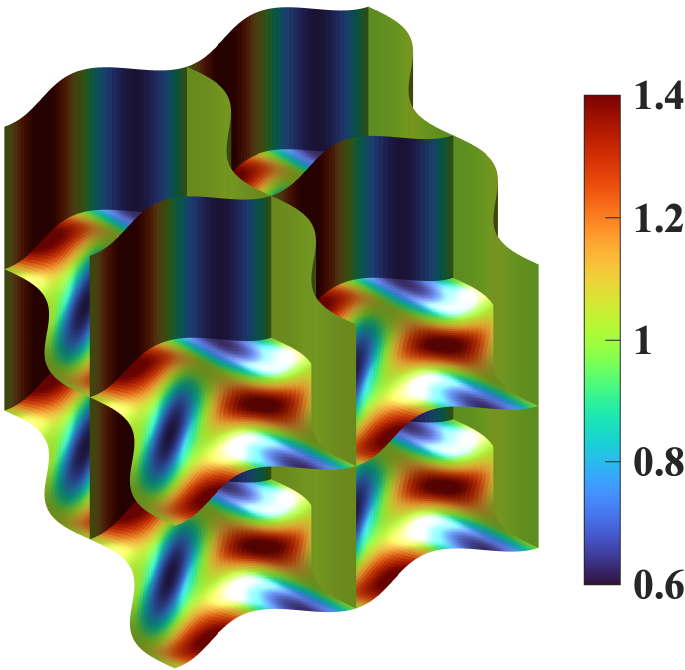}
    \label{fig4b_similarityJacobian}}
    \\
    \subfigure[]{\includegraphics[width=0.25\linewidth]{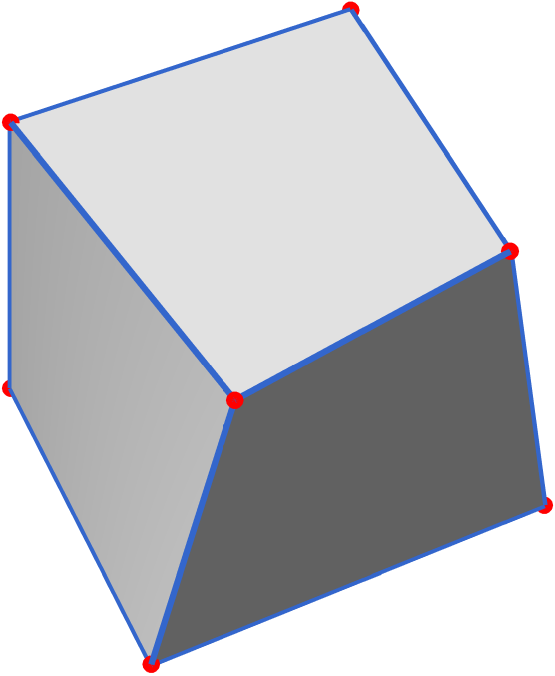}
    \label{fig4c_linearExample}}
    \quad
    \subfigure[]{\includegraphics[width=0.35\linewidth]{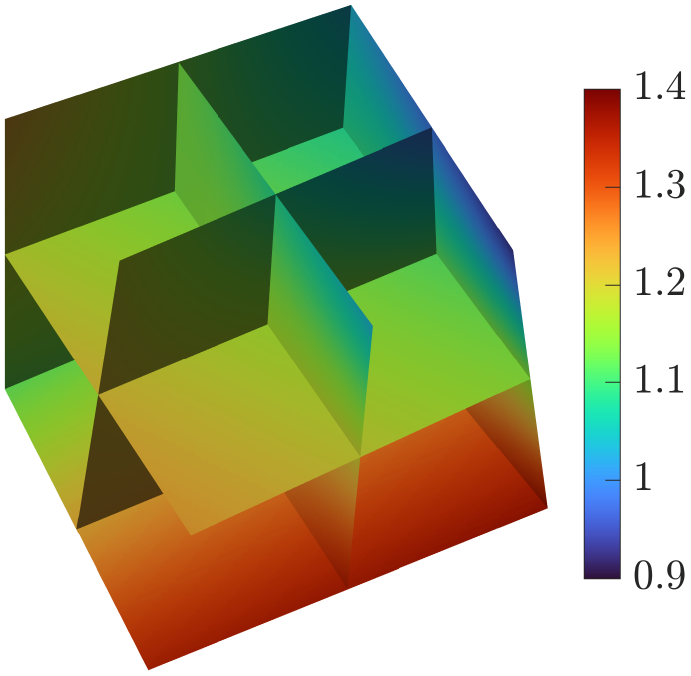}
    \label{fig4d_linearExampleJacobian}}
    \caption{Regularity verification of Coons volume mappings under two representative boundary configurations. 
    (a), (b): Mapping result and corresponding Jacobian determinant for Corollary~\ref{corollary 1}, where the six boundary surfaces exhibit translational relationships. 
    (c), (d): Mapping result and Jacobian determinant for Corollary~\ref{corollary 2}, where both boundary surfaces and blending functions are linearly parameterized. 
    In both cases, the Jacobian determinants remain strictly positive across the domain, confirming the bijectivity and regularity of the constructed Coons volume mappings.}
    \label{fig:coons-jacobian-regularity}
\end{figure}

Theorem~\ref{theorem 1} provides a sufficient condition for ensuring the regularity of Coons volume mappings constructed from general boundary surfaces. Notably, this condition is independent of the specific parametric representations of the boundary surfaces or the form of the blending functions, thus exhibiting strong generality and applicability across a wide range of geometric configurations and parameter domains. The regularity of the mapping is primarily determined by the geometric characteristics of the boundary surfaces, particularly the magnitude of surface variation and the degree of similarity between opposing boundary faces. By imposing appropriate constraints on these geometric features, we further derive simplified sufficient conditions for specific structural cases, as presented in Corollaries~\ref{corollary 1} and~\ref{corollary 2}. These cases retain the simplicity of the Coons construction while guaranteeing the regularity of the resulting Coons volume mapping $\bm{\mathcal{T}}$.

\subsection{Regularity analysis of Coons volume mappings in B\'ezier formulation}
\label{sec203:Regularity Analysis of Coons Volume Mappings in B\'ezier Representation}

The B\'ezier method is one of the most widely adopted parametric representations in engineering design and computational geometry due to its intuitive geometric control and mathematical simplicity. In many practical applications, both the boundary surfaces and the blending functions of Coons volume constructions are expressed in B\'ezier form.

For this commonly encountered case, we reformulate the general regularity condition proposed in Theorem~\ref{theorem 1} into a set of constraints directly imposed on the B\'ezier control points. This reformulation enables a more intuitive geometric interpretation of the regularity condition and facilitates its practical verification in computational implementations.

Suppose that all six boundary surfaces of the Coons volume are represented in B\'ezier form \cite{farin2014curves}. In this setting, the boundary surfaces can be expressed as follows
\begin{equation} 
\begin{aligned}
     \bm{\mathcal{T}}(0, v, w) &=  \sum_{j=0}^{n} \sum_{k=0}^{n} \bm{P}_{0jk} \mathcal{B}_j^n(v) \mathcal{B}_k^n(w) , \quad
     \bm{\mathcal{T}}(1, v, w) =  \sum_{j=0}^{n} \sum_{k=0}^{n} \bm{P}_{njk} \mathcal{B}_j^n(v) \mathcal{B}_k^n(w),\\
     \bm{\mathcal{T}}(u, 0, w) &=  \sum_{i=0}^{n} \sum_{k=0}^{n} \bm{P}_{i0k} \mathcal{B}_i^n(u) \mathcal{B}_k^n(w),\quad\bm{\mathcal{T}}(u, 1, w) =  \sum_{i=0}^{n} \sum_{k=0}^{n} \bm{P}_{ink} \mathcal{B}_i^n(u) \mathcal{B}_k^n(w) , \\
     \bm{\mathcal{T}}(u, v, 0)& =  \sum_{i=0}^{n} \sum_{j=0}^{n} \bm{P}_{ij0} \mathcal{B}_i^n(u) \mathcal{B}_j^n(v),\quad\bm{\mathcal{T}}(u, v, 1)=  \sum_{i=0}^{n} \sum_{j=0}^{n} \bm{P}_{ijn} \mathcal{B}_i^n(u) \mathcal{B}_j^n(v),
\end{aligned}
\end{equation}
where $\mathcal{B}_i^n(u)$, $\mathcal{B}_j^n(v)$, and $\mathcal{B}_k^n(w)$ represent the B\'ezier basis functions of degree $n$, $\bm{P}_{ijk}$ denote the control points, and the blending functions $F_0(t)$ and $F_1(t)$ are also expressed in B\'ezier form, namely
\begin{equation}
\begin{aligned}
   F_1(t)& = 1-F_0(t) = \sum_{i=0}^{n} \alpha_i \mathcal{B}_i^n(t),
\end{aligned}
\end{equation}
{\color{red}where $\alpha_i$ denote the scalar coefficients of the B\'ezier representation of the blending functions.} Under these assumptions, the Coons volume mapping $\bm{\mathcal{T}}(u, v, w)$ can be represented as a trivariate B\'ezier volume
\begin{equation}
\begin{aligned}
    \bm{\mathcal{T}}(u, v, w) &= \sum_{i=0}^{n} \sum_{j=0}^{n} \sum_{k=0}^{n} \bm{P}_{ijk} \mathcal{B}_i^n(u) \mathcal{B}_j^n(v) \mathcal{B}_k^n(w),
\end{aligned}
\end{equation} 
where the control points $\bm{P}_{ijk}$ is given by
\begin{equation}
\begin{aligned}
    \bm{P}_{ijk}&=\bm{P}_{0jk}+\bm{P}_{i0k}+\bm{P}_{ij0}-\bm{P}_{i00}-\bm{P}_{0j0}-\bm{P}_{00k}+\bm{P}_{000}+\alpha_i[\bm{P}_{njk}-\bm{P}_{0jk}+\bm{P}_{00k}-\bm{P}_{n0k}+\bm{P}_{0j0}-\bm{P}_{nj0}\\
    &-\bm{P}_{000}+\bm{P}_{n00}+\alpha_j(\bm{P}_{n0k}-\bm{P}_{00k}+\bm{P}_{0nk}-\bm{P}_{nnk}+\bm{P}_{000}-\bm{P}_{0n0}-\bm{P}_{n00}+\bm{P}_{nn0})+\alpha_k(\bm{P}_{0jn}-\bm{P}_{0j0}\\
    &+\bm{P}_{nj0}-\bm{P}_{njn}+\bm{P}_{000}-\bm{P}_{n00}-\bm{P}_{00n}+\bm{P}_{n0n})+\alpha_j\alpha_k(\bm{P}_{0n0}-\bm{P}_{000}+\bm{P}_{n00}+\bm{P}_{00n}-\bm{P}_{nn0}-\bm{P}_{0nn}\\
    &-\bm{P}_{n0n}+\bm{P}_{nnn})]+\alpha_j[\bm{P}_{ink}-\bm{P}_{i0k}+\bm{P}_{00k}-\bm{P}_{0nk}-\bm{P}_{in0}+\bm{P}_{0n0}-\bm{P}_{000}+\alpha_k(\bm{P}_{in0}-\bm{P}_{i00}+\bm{P}_{i0n}\\
    &-\bm{P}_{inn}+\bm{P}_{000}-\bm{P}_{0n0}-\bm{P}_{00n}+\bm{P}_{0nn})]+\alpha_k(\bm{P}_{ijn}-\bm{P}_{ij0}+\bm{P}_{i00}+\bm{P}_{0j0}-\bm{P}_{0jn}-\bm{P}_{000}+\bm{P}_{00n}).
\end{aligned}
\end{equation}

\begin{theorem}\label{theorem 2}
Let $\bm{\mathcal{T}} : [0,1]^3 \rightarrow \mathbb{R}^3$ be a trivariate Coons mapping of degree $(n,n,n)$ expressed in B\'ezier form, with control points $\bm{P}_{ijk}$. If for all multi-indices $p,q,r = 0,1,\dots,3n-1$, the following inequality holds
\begin{equation}
    \bm{\mathcal{J}}_{pqr} = \sum_{\substack{i_1+i_2+i_3=p \\ j_1+j_2+j_3=q \\ k_1+k_2+k_3=r}} D(i_1, j_1, k_1 , i_2, j_2, k_2 , i_3 ,j_3 ,k_3) \frac{\binom{n-1}{i_1} \binom{n}{i_2} \binom{n}{i_3}  \binom{n}{j_1}\binom{n-1}{j_2}\binom{n}{j_3}  \binom{n}{k_1} \binom{n}{k_2} \binom{n-1}{k_3}}{\binom{3n-1}{i_1+j_1+k_1} \binom{3n-1}{i_2+j_2+k_2} \binom{3n-1}{i_3+j_3+k_3}} > 0,
    \label{eq:Jpqr}
\end{equation}
where
\begin{equation}
    D(i_1, j_1, k_1, i_2, j_2, k_2, i_3, j_3, k_3) = n^3 \det \left[ \bm{P}_{i_1+1, j_1, k_1} - \bm{P}_{i_1, j_1, k_1}, \ \bm{P}_{i_2, j_2+1, k_2} - \bm{P}_{i_2, j_2, k_2}, \ \bm{P}_{i_3, j_3, k_3+1} - \bm{P}_{i_3, j_3, k_3} \right],
    \label{eq:computeD}
\end{equation}
then the Coons volume mapping $\bm{\mathcal{T}}$ is regular on $[0,1]^3$.
\end{theorem}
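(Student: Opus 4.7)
The plan is to express the Jacobian determinant of $\bm{\mathcal{T}}$ explicitly in the trivariate Bernstein basis of degree $(3n-1,3n-1,3n-1)$, read off its control coefficients, and then invoke the non-negativity of the Bernstein basis on $[0,1]^3$. The quantity $\bm{\mathcal{J}}_{pqr}$ defined in the statement is precisely the $(p,q,r)$-th Bézier coefficient of $\det[\bm{\mathcal{T}}_u,\bm{\mathcal{T}}_v,\bm{\mathcal{T}}_w]$, so once this identification is made, the positivity assumption yields $\det[\bm{\mathcal{T}}_u,\bm{\mathcal{T}}_v,\bm{\mathcal{T}}_w](u,v,w)>0$ throughout the unit cube.

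First I would compute the three partial derivatives of the trivariate Bézier volume. Using the standard formula $\frac{d}{dt}\mathcal{B}_i^n(t) = n\bigl(\mathcal{B}_{i-1}^{n-1}(t) - \mathcal{B}_i^{n-1}(t)\bigr)$, I would write $\bm{\mathcal{T}}_u$ as a Bézier volume of tridegree $(n-1,n,n)$ with control points $n(\bm{P}_{i+1,j,k}-\bm{P}_{i,j,k})$, and analogously for $\bm{\mathcal{T}}_v$ of tridegree $(n,n-1,n)$ and $\bm{\mathcal{T}}_w$ of tridegree $(n,n,n-1)$. Expanding the $3\times 3$ determinant by multilinearity then produces a sum indexed by triples $((i_1,j_1,k_1),(i_2,j_2,k_2),(i_3,j_3,k_3))$, whose coefficient on the product of Bernstein basis functions is exactly $D(i_1,j_1,k_1,i_2,j_2,k_2,i_3,j_3,k_3)$ given by Eq.~\eqref{eq:computeD}.

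Next I would apply the product-to-Bernstein identity
\begin{equation*}
\mathcal{B}_{i_1}^{n-1}(u)\mathcal{B}_{i_2}^{n}(u)\mathcal{B}_{i_3}^{n}(u)
= \frac{\binom{n-1}{i_1}\binom{n}{i_2}\binom{n}{i_3}}{\binom{3n-1}{i_1+i_2+i_3}}\,\mathcal{B}_{i_1+i_2+i_3}^{3n-1}(u),
\end{equation*}
and its analogues in $v$ and $w$, to re-express every mixed monomial in the single higher-degree Bernstein basis. Collecting the terms whose indices sum to $(p,q,r)$ converts the determinant into
\begin{equation*}
\det[\bm{\mathcal{T}}_u,\bm{\mathcal{T}}_v,\bm{\mathcal{T}}_w](u,v,w)
= \sum_{p,q,r=0}^{3n-1}\bm{\mathcal{J}}_{pqr}\,\mathcal{B}_p^{3n-1}(u)\mathcal{B}_q^{3n-1}(v)\mathcal{B}_r^{3n-1}(w),
\end{equation*}
with $\bm{\mathcal{J}}_{pqr}$ exactly as in Eq.~\eqref{eq:Jpqr}. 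Since $\mathcal{B}_i^{3n-1}(t)\ge 0$ on $[0,1]$ and the basis is not identically zero, the hypothesis $\bm{\mathcal{J}}_{pqr}>0$ for all $p,q,r$ implies the determinant is strictly positive on $[0,1]^3$, hence $\bm{\mathcal{T}}$ is regular.

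The principal obstacle is bookkeeping rather than mathematics: one must carefully track the asymmetric tridegrees of the three derivative blocks, ensure that the convolution indices $i_1+i_2+i_3=p$, $j_1+j_2+j_3=q$, $k_1+k_2+k_3=r$ are attached to the correct derivative factor, and that the binomial ratio matches the one displayed in Eq.~\eqref{eq:Jpqr}. A secondary care point is to verify that the product formula is applied to the correct pair of degrees $(n-1,n,n)$ (rather than, say, $(n,n,n-1)$) in each of the three variables; a mismatch here would permute the binomial coefficients in the denominator formula. Once the indexing is aligned with Eq.~\eqref{eq:computeD}, the conclusion is immediate from the convex-hull property of Bézier representations.
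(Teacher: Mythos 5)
Your proposal is correct and follows essentially the same route as the paper's proof: differentiate the B\'ezier volume to get blocks of tridegrees $(n-1,n,n)$, $(n,n-1,n)$, $(n,n,n-1)$, expand the determinant by multilinearity, raise each variable's Bernstein product to degree $3n-1$ via the product identity, and invoke non-negativity and partition of unity of the Bernstein basis. One small caveat: the denominator your (correct) per-variable derivation produces is $\binom{3n-1}{i_1+i_2+i_3}\binom{3n-1}{j_1+j_2+j_3}\binom{3n-1}{k_1+k_2+k_3}=\binom{3n-1}{p}\binom{3n-1}{q}\binom{3n-1}{r}$, whereas Eq.~\eqref{eq:Jpqr} as printed groups the indices by derivative factor ($i_1+j_1+k_1$, etc.), so the displayed formula does not actually ``match'' your ratio --- this looks like an index typo in the theorem statement rather than a flaw in your argument.
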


\begin{proof}
Assume that the three-dimensioal Coons mapping $\bm{\mathcal{T}}$ is given in B\'ezier form
\begin{equation}
\begin{aligned}
    \bm{\mathcal{T}}(u, v, w) = \sum_{i=0}^{n} \sum_{j=0}^{n} \sum_{k=0}^{n} \mathbf{P}_{i,j,k} \mathcal{B}_i^n(u) \mathcal{B}_j^{n}(v) \mathcal{B}_k^{n}(w).
    \end{aligned}
\end{equation}
Its first-order partial derivatives are
\begin{equation}
\begin{aligned}
   {\color{red} \partial_u \bm{\mathcal{T}}}(u, v, w) &= \sum_{i_1=0}^{n-1} \sum_{j_1=0}^{n} \sum_{k_1=0}^{n} n (\mathbf{P}_{i+1,j,k} - \mathbf{P}_{i,j,k}) \mathcal{B}_{i_1}^{n-1}(u) \mathcal{B}_{j_1}^{n}(v) \mathcal{B}_{k_1}^{n}(w) ,\\
  {\color{red} \partial_v \bm{\mathcal{T}}}(u, v, w) &= \sum_{i_2=0}^{n} \sum_{j_2=0}^{n-1} \sum_{k_2=0}^{n} n (\mathbf{P}_{i,j+1,k} - \mathbf{P}_{i,j,k}) \mathcal{B}_{i_2}^{n}(u) \mathcal{B}_{j_2}^{n-1}(v) \mathcal{B}_{k_2}^{n}(w), \\
   {\color{red} \partial_w \bm{\mathcal{T}}}(u, v, w) &= \sum_{i_3=0}^{n} \sum_{j_3=0}^{n} \sum_{k_3=0}^{n-1} n (\mathbf{P}_{i,j,k+1} - \mathbf{P}_{i,j,k}) \mathcal{B}_{i_3}^{n}(u) \mathcal{B}_{j_3}^{n}(v) \mathcal{B}_{k_3}^{n-1}(w).
\end{aligned}
\end{equation}
The Jacobian determinant $\tilde{\bm{\mathcal{T}}}$ is thus expressed as
\begin{equation}
    \begin{aligned}
       \tilde{\bm{\mathcal{T}}} &= \sum_{i_1=0}^{n-1}  \sum_{i_2,i_3=0}^{n} \sum_{j_2=0}^{n-1}\sum_{j_1,j_3=0}^{n} \sum_{k_3=0}^{n-1} \sum_{k_1,k_2=0}^{n} D \mathcal{B}_{i_1,j_1,k_1}^{n-1,n,n}(u,v,w) \mathcal{B}_{i_2,j_2,k_2}^{n,n-1,n}(u,v,w) \mathcal{B}_{i_3,j_3,k_3}^{n,n,n-1}(u,v,w),\\
    \end{aligned}
\end{equation}
where ${\color{red}D=D(i_1, j_1, k_1, i_2, j_2, k_2, i_3, j_3, k_3) }$ is defined by Eq.~\eqref{eq:computeD}. By utilizing the product identity of Bernstein basis functions
\begin{equation}
    \mathcal{B}_i^{n}(u)\mathcal{B}_j^{m}(u) = \frac{\binom{n}{i} \binom{m}{j}}{\binom{n+m}{i+j}} \mathcal{B}_{i+j}^{n+m}(u),
\end{equation}
we can express the Jacobian determinant $\tilde{\bm{\mathcal{T}}}$ as a linear combination of Bernstein tensor-product basis functions
\begin{equation}
    \left | \bm{\mathcal{J}}(u, v, w) \right | = \tilde{\bm{\mathcal{T}}}= \sum_{p=0}^{3n-1} \sum_{q=0}^{3n-1} \sum_{r=0}^{3n-1}\bm{\mathcal{J}}_{pqr} \mathcal{B}_p^{3n-1}(u) \mathcal{B}_q^{3n-1}(v) \mathcal{B}_r^{3n-1}(w) .
\end{equation}

Since Bernstein basis functions $\mathcal{B}_i^{n}(u)$ are non-negative and form a partition of unity, the positivity of all coefficients $\bm{\mathcal{J}}_{pqr}$ implies
\begin{equation}
    \bm{\mathcal{J}}(u, v, w) > 0, \quad \forall (u, v, w) \in [0,1]^3.
\end{equation}
Consequently, the Coons volume mapping $\bm{\mathcal{T}}$ is regular.
\end{proof}

In Theorems~\ref{theorem 1} and~\ref{theorem 2}, we established sufficient conditions for the regularity of Coons volume $\bm{\mathcal{T}}$ when both the boundary surfaces and the blending functions are expressed in general forms or B\'ezier forms, respectively. These sufficient conditions reveal the relationship between the regularity of the Coons mapping and its geometric constructions. Building on these foundations, we now introduce a subdivision-based analysis and propose a necessary condition for the regularity of $\bm{\mathcal{T}}$, formulated in terms of the sign consistency of the Jacobian determinant coefficients over the B\'ezier sub-blocks. 

{\color{red}Related derivations based on B\'ezier subdivision and Jacobian sign conditions have been previously reported in~\cite{gain2002preventing} in the context of free-form deformation.}

\begin{theorem}\label{theorem 3}
Let $\bm{\mathcal{T}}$ be a regular Coons volume mapping defined over $[0,1]^3$, bounded by six boundary surfaces $\bm{\mathcal{S}}_i$ $(i = 1,2,\dots,6)$, and assume its Jacobian determinant $\bm{\mathcal{J}}(u,v,w)$ satisfies $\bm{\mathcal{J}} > 0$ everywhere. Partition $\bm{\mathcal{T}}$ uniformly into $\sigma$ subdivisions along each parametric direction, resulting in $\sigma^3$ B\'ezier sub-blocks $\bm{\mathcal{T}}^{ijk}$, for $i,j,k = 1,2,\dots,\sigma$. Then, for sufficiently large $\sigma$, the B\'ezier coefficients $\bm{\mathcal{J}}_{pqr}^{ijk}$ $(p,q,r = 0,1,\ldots,3n-1)$ of the Jacobian determinant in each sub-block $\bm{\mathcal{T}}^{ijk}$ must all share the same sign.
\end{theorem}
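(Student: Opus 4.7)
The plan is to exploit the quadratic convergence of Bernstein--B\'ezier control coefficients to the underlying polynomial values under repeated subdivision, combined with the strict positivity of $\bm{\mathcal{J}}$ that follows from the hypothesis of regularity. The overall argument is an approximation-theoretic one: on sufficiently fine sub-blocks, the B\'ezier net of $\bm{\mathcal{J}}$ lies close enough to $\bm{\mathcal{J}}$ itself that no coefficient can possibly deviate to the opposite sign.

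First, I would observe that by Theorem~\ref{theorem 2}, the Jacobian determinant $\bm{\mathcal{J}}(u,v,w) = \det[\bm{\mathcal{T}}_u, \bm{\mathcal{T}}_v, \bm{\mathcal{T}}_w]$ is a trivariate polynomial of tridegree at most $(3n-1,3n-1,3n-1)$ admitting an exact Bernstein representation on $[0,1]^3$ with coefficients $\bm{\mathcal{J}}_{pqr}$. When $\bm{\mathcal{T}}$ is uniformly subdivided into $\sigma^3$ sub-blocks $\bm{\mathcal{T}}^{ijk}$, the restriction of $\bm{\mathcal{J}}$ to each sub-cube $[(i-1)/\sigma, i/\sigma]\times[(j-1)/\sigma, j/\sigma]\times[(k-1)/\sigma, k/\sigma]$ is itself a polynomial of the same tridegree, whose Bernstein coefficients $\bm{\mathcal{J}}^{ijk}_{pqr}$ are obtained by applying the de Casteljau subdivision algorithm, or equivalently by evaluating the trivariate blossom of $\bm{\mathcal{J}}$ at the appropriate tuples of arguments drawn from the sub-cube.

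Next, I would invoke the standard quantitative estimate for Bernstein representations: because the blossom is affine in each of its arguments, each coefficient $\bm{\mathcal{J}}^{ijk}_{pqr}$ can be written as $\bm{\mathcal{J}}(u^*,v^*,w^*) + R^{ijk}_{pqr}$, where $(u^*,v^*,w^*)$ is the Greville-type point inside the sub-cube and the remainder satisfies $|R^{ijk}_{pqr}| \leq C/\sigma^2$. Here $C$ depends only on the maximum of the second-order partial derivatives of $\bm{\mathcal{J}}$ on $[0,1]^3$ and on the tridegree $3n-1$, and it is therefore uniform across all sub-blocks $(i,j,k)$ and all multi-indices $(p,q,r)$.

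Finally, since $\bm{\mathcal{T}}$ is regular by hypothesis, the polynomial $\bm{\mathcal{J}}$ is strictly positive on the compact cube $[0,1]^3$, so it attains a positive minimum $\bm{\mathcal{J}}_{\min} > 0$. Choosing $\sigma$ large enough that $C/\sigma^2 < \bm{\mathcal{J}}_{\min}$ yields
\begin{equation*}
\bm{\mathcal{J}}^{ijk}_{pqr} \;\geq\; \bm{\mathcal{J}}(u^*,v^*,w^*) - \frac{C}{\sigma^2} \;\geq\; \bm{\mathcal{J}}_{\min} - \frac{C}{\sigma^2} \;>\; 0
\end{equation*}
uniformly over every sub-block and every multi-index, so all coefficients are positive and hence share the same sign. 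The main technical obstacle, I expect, will be making the trivariate quadratic convergence estimate fully rigorous with an explicit constant: one must track the blossoming argument in all three parametric directions simultaneously and verify that the second-difference bound on the global control net of $\bm{\mathcal{J}}$ transfers cleanly to a uniform $O(\sigma^{-2})$ deviation on every sub-cube, independent of the sub-block index $(i,j,k)$.
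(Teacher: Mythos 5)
Your proposal is correct and follows essentially the same route as the paper: both arguments represent the sub-block coefficients $\bm{\mathcal{J}}^{ijk}_{pqr}$ via the trivariate blossom evaluated at Greville-type points, establish that each coefficient equals a value of $\bm{\mathcal{J}}$ plus an $\mathcal{O}(\sigma^{-2})$ remainder (the paper obtains this by an explicit first-order Taylor expansion of the blossom whose linear terms cancel by symmetry, which is exactly the ``standard quantitative estimate'' you invoke), and conclude positivity from the uniform lower bound $\bm{\mathcal{J}} \geq \rho > 0$ on the compact cube. The only difference is that the paper writes out the blossom expansion you deferred as the ``main technical obstacle.''
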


\begin{proof}
Since $\bm{\mathcal{T}}$ is regular, its Jacobian determinant $\bm{\mathcal{J}}(u,v,w)$ is continuous on the closed domain $[0,1]^3$. Therefore, there exists a constant $\rho > 0$ such that
\begin{equation}
    \bm{\mathcal{J}}(u,v,w) \geq \rho, \quad \forall (u,v,w) \in [0,1]^3.
\end{equation}

Fix a sub-block index $(i,j,k)$ and consider the restriction of the Jacobian determinant on this subdomain, denoted by $\bm{\mathcal{J}}^{ijk}(u,v,w)$. According to B\'ezier theory, this function can be represented using the trivariate blossom function $\mathbf{P}^{ijk}$
\begin{equation}
    \bm{\mathcal{J}}^{ijk} = \mathbf{P}^{ijk}(\underbrace{u,\dots,u}_{3n-1}, \underbrace{v,\dots,v}_{3n-1}, \underbrace{w,\dots,w}_{3n-1}).
\end{equation}

Let $h = \frac{1}{(3n-1)\sigma}$ denote the grid spacing, and define the evaluation points
\begin{equation}
    a_p = a + ph, \quad c_q = c + qh, \quad e_r = e + rh, \quad p, q, r = 0, 1, \ldots, 3n-1.
\end{equation}

Next, by performing the first-order multivariate Taylor expansion of the blossom function $\mathbf{P}^{ijk}$ at the point $(\underbrace{a_p, \dots, a_p}_{3n-1}, \underbrace{c_q, \dots, c_q}_{3n-1}, \underbrace{e_r, \dots, e_r}_{3n-1})$ and utilizing the symmetry property of the blossom functions, we obtain
\begin{equation}
    \begin{aligned}
   &\mathbf{P}^{ijk}(\underbrace{a,...,a}_{3n-1-p}, \underbrace{b,...,b}_{p}; \underbrace{c,...,c}_{3n-1-q}, \underbrace{d,...,d}_{q}; \underbrace{e,...,e}_{3n-1-r}, \underbrace{f,...,f}_{r}) = \mathbf{P}^{ijk}(\underbrace{a_p,...,a_p}_{3n-1},\underbrace{c_q,...,c_q}_{3n-1},\underbrace{e_r,...,e_r}_{3n-1}) \\
   &+ \sum_{s=1}^{3n-1-p} (a - a_p) \frac{\partial}{\partial u_s} \mathbf{P}^{ijk}(\underbrace{a_p,...,a_p}_{3n-1}, \underbrace{c_q,...,c_q}_{3n-1},\underbrace{e_r,...,e_r}_{3n-1}) + \sum_{s=3n-p}^{3n-1} (b - a_p) \frac{\partial}{\partial u_s}  \mathbf{P}^{ijk}(\underbrace{a_p,...,a_p}_{3n-1}, \underbrace{c_q,...,c_q}_{3n-1}, \underbrace{e_r,...,e_r}_{3n-1}) \\
   &+ \sum_{s=1}^{3n-q-1} (c - c_q) \frac{\partial}{\partial v_s} \mathbf{P}^{ijk}(\underbrace{a_p,...,a_p}_{3n-1}, \underbrace{c_q,...,c_q}_{3n-1}, \underbrace{e_r,...,e_r}_{3n-1}) + \sum_{s=3n-q}^{3n-1} (d - c_q) \frac{\partial}{\partial v_s} \mathbf{P}^{ijk}(\underbrace{a_p,...,a_p}_{3n-1}, \underbrace{c_q,...,c_q}_{3n-1}, \underbrace{e_r,...,e_r}_{3n-1}) \\
   &+ \sum_{s=1}^{3n-1-r} (e - e_r) \frac{\partial}{\partial w_s} \mathbf{P}^{ijk}(\underbrace{a_p,...,a_p}_{3n-1},\underbrace{c_q,...,c_q}_{3n-1},\underbrace{e_r,...,e_r}_{3n-1}) + \sum_{s=3n-r}^{3n-1} (f - e_r) \frac{\partial}{\partial w_s} \mathbf{P}^{ijk}(\underbrace{a_p,...,a_p}_{3n-1},\underbrace{c_q,...,c_q}_{3n-1},\underbrace{e_r,...,e_r}_{3n-1})  \\
   &+ \mathcal{O}(h^2).
   \end{aligned}
\end{equation}

By exploiting the symmetry property of the blossom function, this expansion leads to the estimate
\begin{equation}
    \begin{aligned}
    \bm{\mathcal{J}}_{pqr}^{ijk} =\bm{\mathcal{J}}^{ijk}(a_p, c_q, e_r) + \mathcal{O}(h^2).
    \end{aligned}
\end{equation}

Furthermore, using the global lower bound $\bm{\mathcal{J}} \geq \rho$, we derive
\begin{equation}
\begin{aligned}
    \bm{\mathcal{J}}_{pqr}^{ijk} &= \bm{\mathcal{J}}^{ijk}(a_p, c_q, e_r) +\bm{\mathcal{J}}_{pqr}^{ijk} -\bm{\mathcal{J}}^{ijk}(a_p, c_q, e_r)
    &\geq \bm{\mathcal{J}}(a_p, c_q, e_r)-\gamma h^2 
    &\geq \rho - \gamma h^2,
\end{aligned}
\end{equation}
where $\gamma$ is a constant dependent on the maximum derivative magnitude. Substituting $h = \frac{1}{(3n-1)\sigma}$ yields
\begin{equation} 
    \gamma h^2 = \frac{\gamma}{(3n-1)^2\sigma^2}. 
\end{equation}

Therefore, for sufficiently large subdivision level $\sigma$ such that $\gamma h^2 \ll \rho$, it follows that
\begin{equation} 
   \bm{\mathcal{J}}_{pqr}^{ijk} > 0,
\end{equation}
which implies that all B\'ezier coefficients of the Jacobian determinant in sub-block $\bm{\mathcal{T}}^{ijk}$ share the same sign. This completes the proof.
\end{proof}

\section{Numerical Experiments}
\label{sec3:numerical experiments}

To evaluate the correctness and computational efficiency of the proposed algorithm, a series of numerical experiments were conducted on a MacBook Pro equipped with an Apple M1 Pro processor and 16~GB of RAM. The implementation was based on a hybrid MATLAB/C++ framework: core computational routines were developed in C++ and integrated into MATLAB via MEX interfaces to improve performance, while high-level control flow and visualization were handled within MATLAB.

The experiments focused on the rapid precomputation of the Jacobian determinant coefficients $\bm{\mathcal{J}}_{pqr}$ for three-dimensional B\'ezier volumes. The control points were generated over structured grids, with B\'ezier degrees $n$ ranging from 1 to 10. The algorithm proceeds by first computing directional differences of control points, followed by efficient traversal of all index combinations associated with $\bm{\mathcal{J}}_{pqr}$. This is achieved using parallelized MEX functions and pre-cached binomial coefficients to minimize redundant calculations. All numerical computations were performed in double-precision arithmetic to ensure stability and accuracy throughout the evaluation.

\begin{figure}[htbp]
\centering
\subfigure[]{\includegraphics[width=0.35\linewidth]{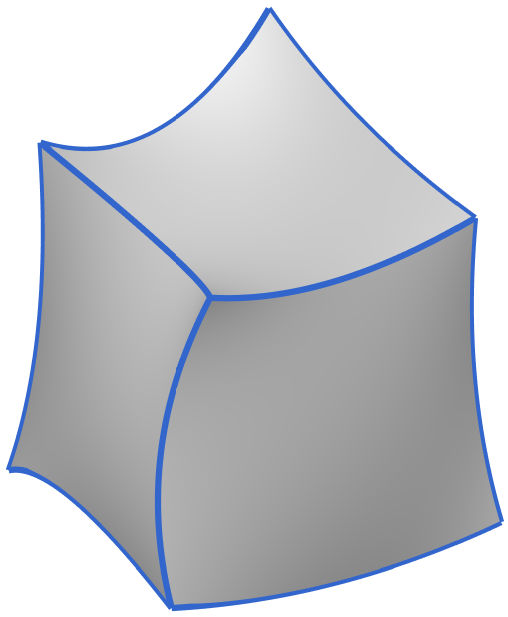}
\label{fig3a:3-2Bezier volume}}
\quad
\subfigure[]{\includegraphics[width=0.45\linewidth]{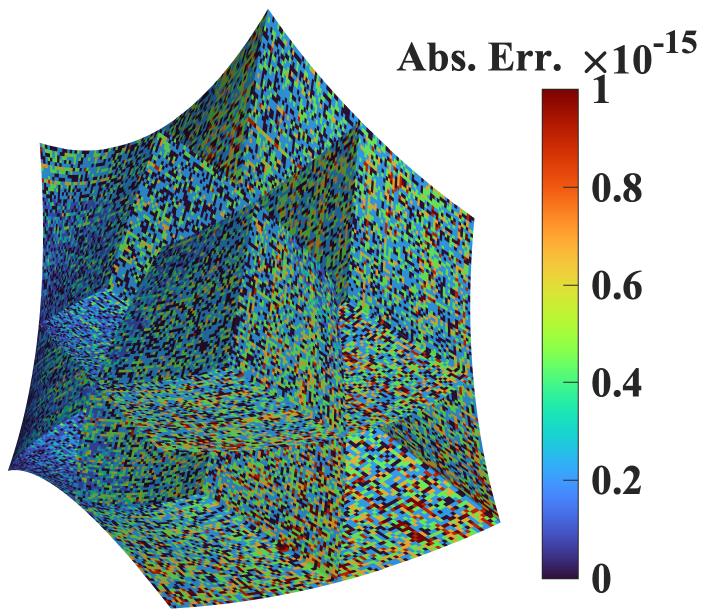}
\label{fig3b:3-2error}}
\\
\subfigure[]{\includegraphics[width=0.35\linewidth]{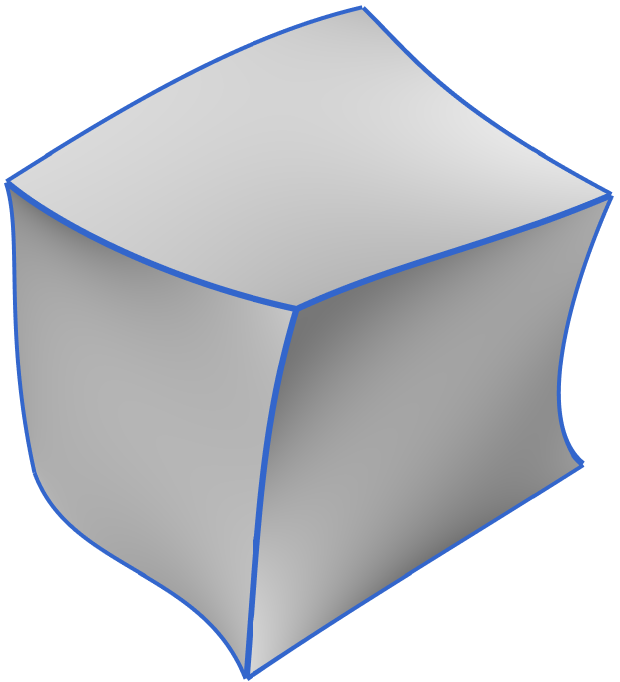}
\label{fig3c:3-3Bezier volume}}
\quad
\subfigure[]{\includegraphics[width=0.45\linewidth]{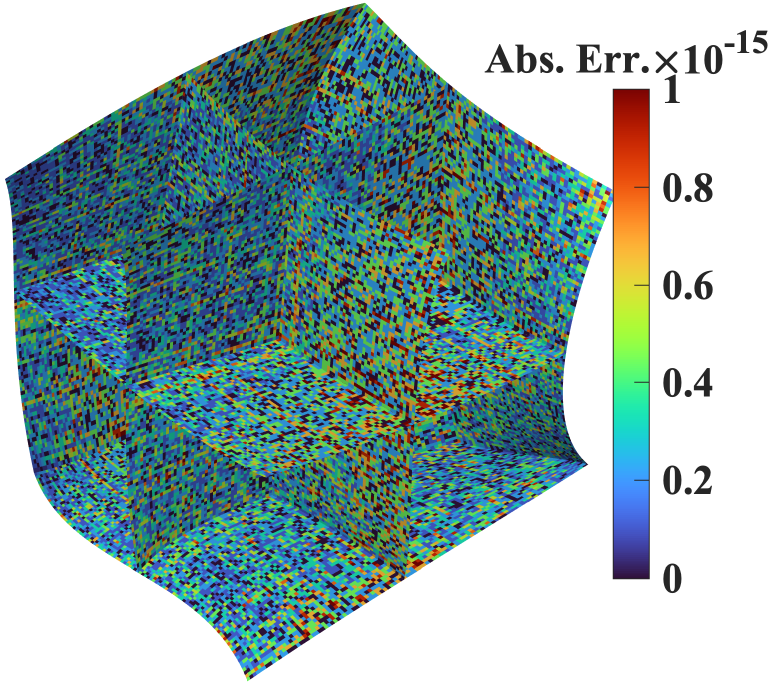}
\label{fig3d:3-3error}}
\caption{Validation of the explicit coefficient formulation of the Jacobian determinant for trivariate B\'ezier volumes.  
(a), (c): Geometric representations of tri-quadratic and tri-cubic B\'ezier volumes, respectively.  
(b), (d): Pointwise absolute error between the analytical and numerically evaluated Jacobian determinants at $101^3$ sample points.}
\label{fig3:bezier_volume_bijectivity}
\end{figure}

\subsection{Validation of the coefficient expressions for the Jacobian determinant}
\label{sec301:validation_jacobian_coefficients}

To validate the correctness of the coefficient expressions for the Jacobian determinant presented in Theorem~\ref{theorem 2}, we computed the coefficients using Eq.~\eqref{eq:Jpqr} and compared them with values obtained through direct numerical evaluation. Specifically, we uniformly sampled 101 points along each parametric direction in the domain $[0,1] \times [0,1] \times [0,1]$. At these sampled points, the Jacobian determinant values were calculated both analytically and numerically, and the absolute errors were used to assess the accuracy.

Figure~\ref{fig3:bezier_volume_bijectivity} illustrates two representative B\'ezier volumes. Figures~\ref{fig3a:3-2Bezier volume} and~\ref{fig3c:3-3Bezier volume} show the geometric configurations of a tri-quadratic and a tri-cubic B\'ezier volume, respectively. The corresponding absolute error distributions in the Jacobian determinant, computed at the sampled points, are presented in Figures~\ref{fig3b:3-2error} and~\ref{fig3d:3-3error}.

The results indicate that the maximum absolute error is on the order of $10^{-15}$, which can be attributed to floating-point round-off errors inherent in numerical computation. It confirms that the Jacobian coefficients derived from Eq.~\eqref{eq:Jpqr} exhibit excellent numerical precision, thereby validating the correctness of the theoretical formulation.

\subsection{Numerical Validation of Computational Efficiency}
\label{sec302:numerical_validation}

In Section~\ref{sec203:Regularity Analysis of Coons Volume Mappings in B\'ezier Representation}, we theoretically analyzed the computational complexity of the coefficients $ \bm{\mathcal{J}}_{pqr} $ of Jacobian determinant involved in Theorem \ref{theorem 2}. To further validate the practical performance of the proposed method, we conducts the numerical experiments to evaluate the algorithm's efficiency and scalability in this section.

\paragraph{Experimental Setup}
In the experiments, we consider 3D B\'ezier mappings on the unit cube with degrees $n = 1, 2, \ldots, 10$, corresponding to $(n+1)^3$ control points. To simulate geometric mappings of general B\'ezier volume, we add random perturbations to each coordinate component of every control points as   
\begin{equation}  
    \bm{P}_{ijk} \leftarrow \bm{P}_{ijk} + \bm{\mathcal{E}},  \quad \bm{\mathcal{E}} \sim \text{Uniform}(0, 0.25)^3,  
\end{equation}  
where $\text{Uniform}(0, 0.25)$ denotes a uniform distribution with a mean of 0 and a variance of 0.25. For each degree, we independently generate $1001$ perturbation samples and compute all the coefficients $ \bm{\mathcal{J}}_{pqr} $ of the Jacobian determinant for each sample. The average computation times and standard deviations are recorded and summarized in Table~\ref{tab:jacobian_time}.


\begin{table}[htbp]
\centering
\caption{Computation time (seconds) for evaluating the coefficients of the Jacobian determinant of B\'ezier volumes with different polynomial degrees. {\color{red}The last column reports the local log--log slope $\bigl(\log T_k-\log T_{k-1}\bigr)/\bigl(\log n_k-\log n_{k-1}\bigr)$, which provides an estimate of the local power-law exponent relating the computation time to the degree $n$. The local slope is not defined for $n=1$ due to the lack of a preceding data point.} Here, \#CtrPts denotes the number of control points of the B\'ezier volume.}
\label{tab:jacobian_time}
\begin{tabular}{cccrc@{\hspace{1em}}|@{\hspace{1em}}cccrc}
    \toprule
    $n$ & \#CtrPts & Mean Time & Std Dev & Local slope &
    $n$ & \#CtrPts & Mean Time & Std Dev & Local slope \\
    \midrule
    1 &  8   & 4.87e-7 & 1.10e-7 &{\color{red}-} & 6 & 343  & 6.70e-2 & 4.46e-3 & {\color{red}7.88}\\
    2 & 27   & 1.99e-5 & 1.48e-6 &{\color{red}5.35} & 7 & 512  & 2.36e-1 & 7.74e-3 & {\color{red}8.16}\\
    3 & 64   & 3.22e-4 & 5.34e-6 &{\color{red}6.87} & 8 & 729  & 7.08e-1 & 1.50e-2 & {\color{red}8.23}\\
    4 & 125  & 2.75e-3 & 6.86e-5 &{\color{red}7.46} & 9 & 1000 & 1.88e-0 & 3.84e-2 & {\color{red}8.30}\\
    5 & 216  & 1.59e-2 & 2.29e-3 &{\color{red}7.87} &10 & 1331 & 4.51e-0 & 6.34e-2 & {\color{red}8.28}\\
    \bottomrule
\end{tabular}
\end{table}

\paragraph{Results and Analysis}

{\color{red}Table~\ref{tab:jacobian_time} shows that the computation time increases steadily with the polynomial degree. The local log--log slopes reported in the last column of the table indicate a strong polynomial dependence of the computation time on the degree $n$, with the slope increasing and stabilizing for higher degrees. When expressed in terms of the number of control points, this trend corresponds to an approximately cubic growth with respect to the control point count (see Fig.~\ref{fig4a:computation_time_with_degree}).} The experimental results confirm the practical operational efficiency and scalability of the algorithm, thus providing a robust basis for future developments and applications. Notably, for low-degree B\'ezier volumes ($n \le 3$), the computation time for Jacobian coefficient is on the order of $10^{-4}$ seconds, meeting the efficiency requirements for engineering and real-time modeling applications.

Figure~\ref{fig4:computational_time} illustrates the trend of computation time versus degree. Figure~\ref{fig4a:computation_time_with_degree} covers the full degree range ($n=1, 2, \ldots, 10$), showing the overall growth pattern of computation time, while Figure~\ref{fig4b:low_degree_time} focuses on the commonly used low-degree cases ($n \le 3$), demonstrating extremely small average computation times and nearly zero standard deviation, highlighting the algorithm's efficiency and numerical stability in practical applications. The experimental results illustrate that the computational method for computing the coefficients of Jacobian determinant proposed in this paper is well-structured and straightforward to implement. {\color{red} Moreover}, it is particularly suitable for the verification of the regularity of higher-order B\'ezier volumes and the fast regularity checking in complex geometric modelling scenarios.

\begin{figure}[H]
    \centering
    \subfigure[Computation time for B\'ezier volumes of degrees $n=1$ to $n=10$]{
    \includegraphics[width=0.45\linewidth]{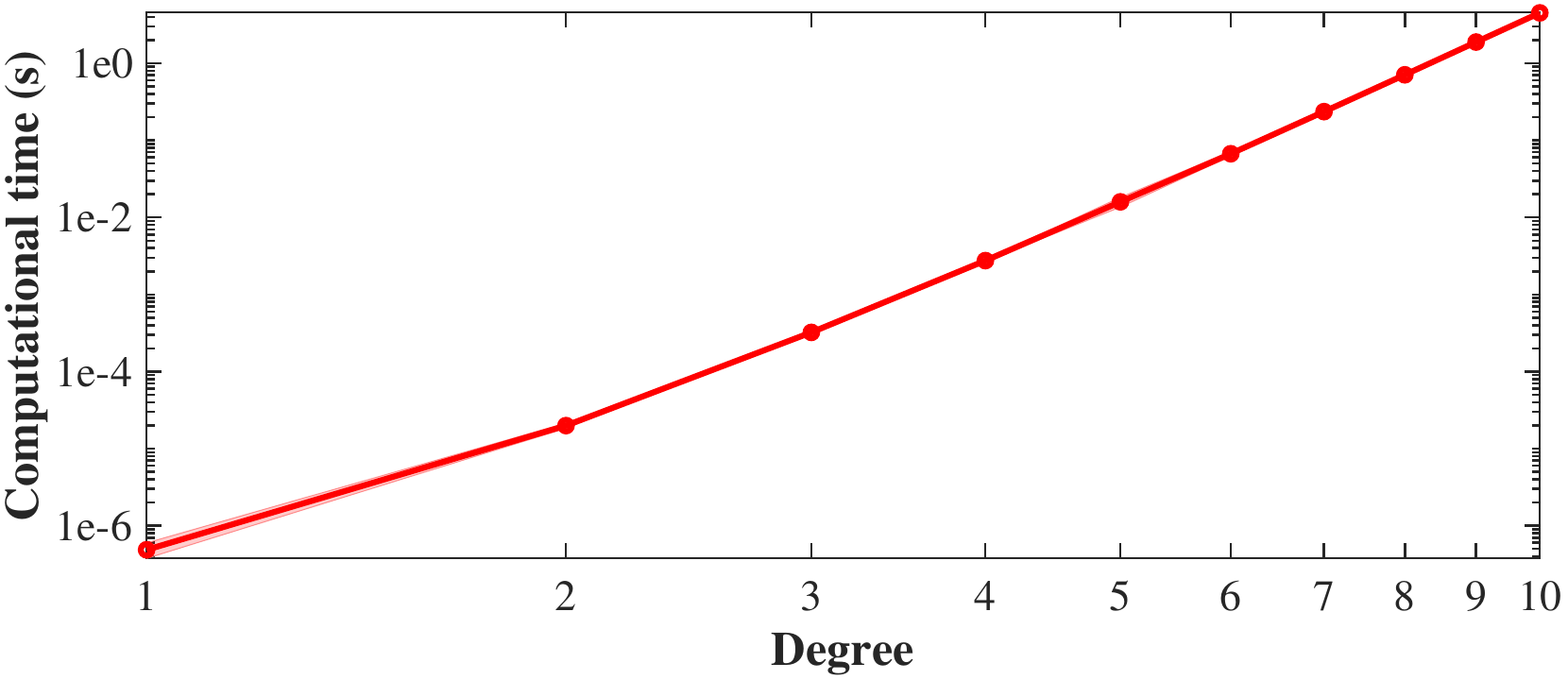}
    \label{fig4a:computation_time_with_degree}
    }
    \quad
    \subfigure[Computation time for low-degree cases ($n=1,2,3$)]{
    \includegraphics[width=0.45\linewidth]{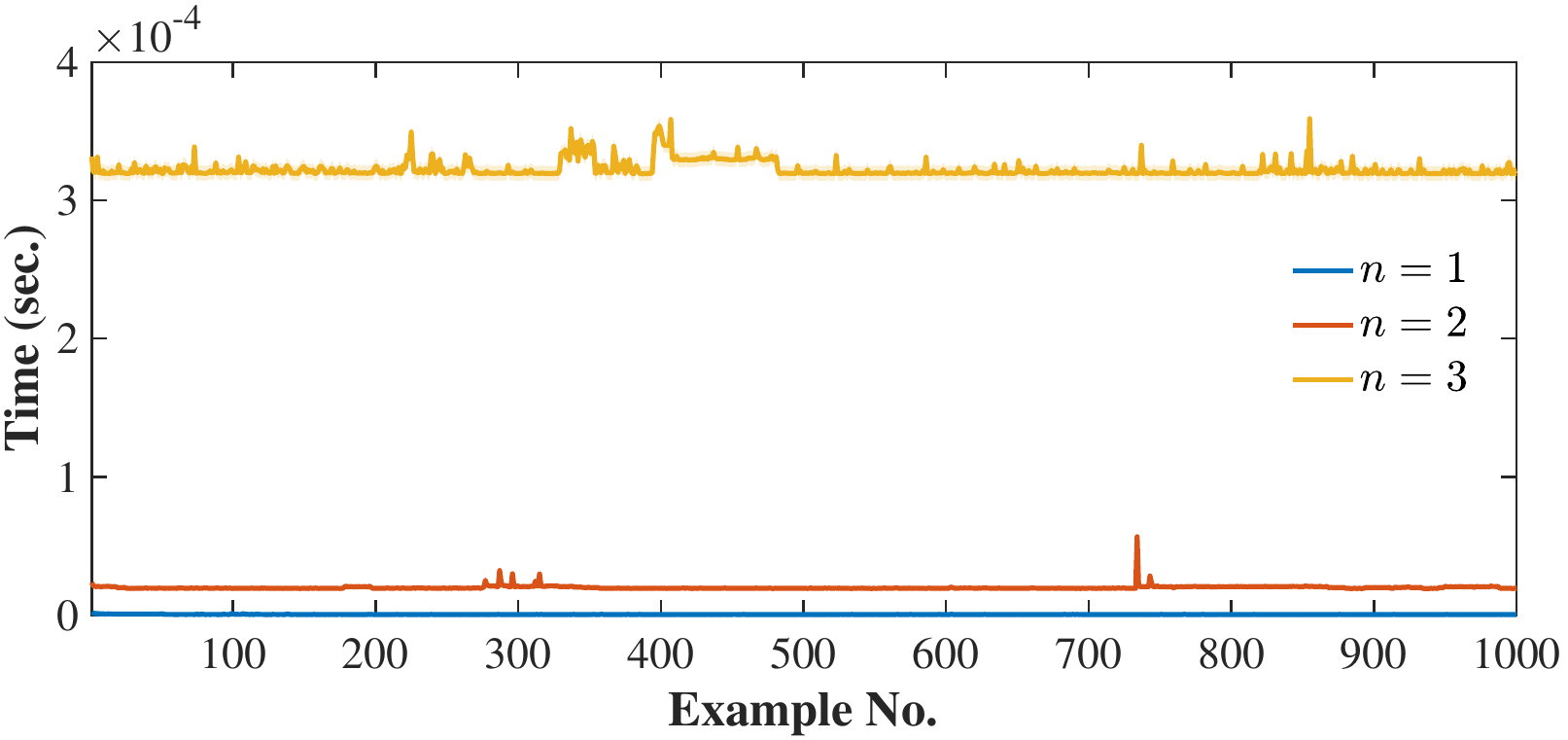}
    \label{fig4b:low_degree_time}
    }
    \caption{Computation time required to evaluate the coefficients of the Jacobian determinant for trivariate B\'ezier volumes with varying degrees.  
    (a) shows the overall trend from degree $n=1$ to $n=10$;  
    (b) provides a comparison of low-degree cases ($n=1,2,3$).}
    \label{fig4:computational_time}
\end{figure}

\subsection{Generalization to regularity verification for arbitrary B-spline volumes}
\label{sec303:Generalization}
Using the B\'ezier extraction technique for B-spline volumes~\cite{borden2011isogeometric}, the proposed regularity verification method can be naturally extended to general B-spline volumes. B\'ezier extraction enables the decomposition of a B-spline volume with complex topology and high-order continuity into local B\'ezier elements, thereby restoring a tensor-product structure that aligns with B\'ezier representation within each element. {\color{red}Note that the conversion from B-spline to Bézier is an exact conversion, not an approximation.} For each extracted B\'ezier element, the explicit coefficient formula of the Jacobian determinant proposed in this paper can be directly applied to check the local regularity condition (i.e., positivity of the Jacobian).

\begin{figure}[!htbp]
    \centering
    {\includegraphics[width=0.4\linewidth]{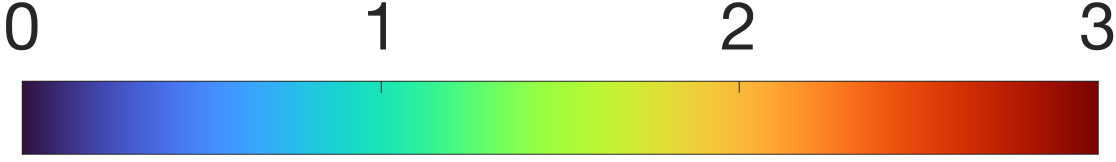}}\\
    \vspace{0.7cm}
    \subfigure[B-spline volume of a blade]{
    \includegraphics[width=0.3\linewidth]{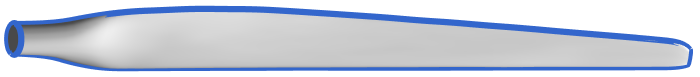}
    \label{fig5a_blade_BSpline}}
    \quad
    \subfigure[Corresponding B\'ezier extraction result (200 B\'ezier elements)]{
    \includegraphics[width=0.3\linewidth]{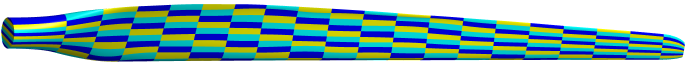}
    \label{fig5b_blade_Bezier}}
    \quad
    \subfigure[Jacobian determinant distribution (time: 3.91 ms)]{
    \includegraphics[width=0.3\linewidth]{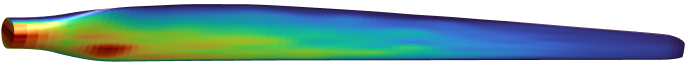}
    \label{fig5c_blade_Jacobian}}\\
    \subfigure[B-spline volume of a propeller]{
    \includegraphics[width=0.3\linewidth]{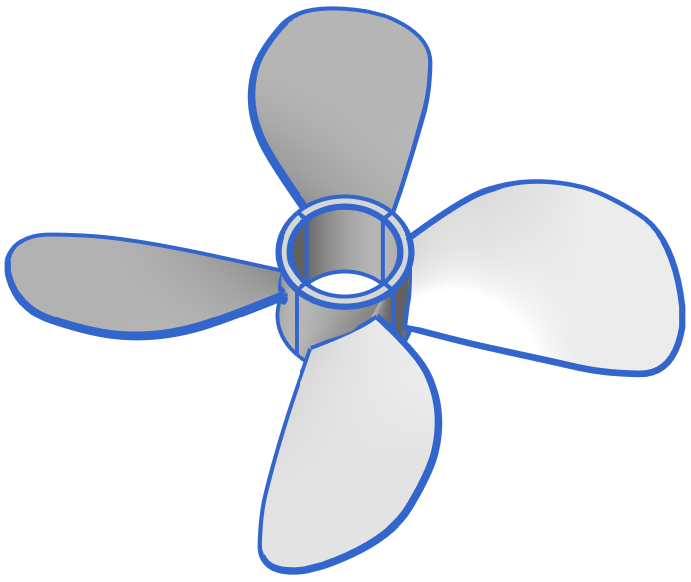}
    \label{fig5d_propeller_BSpline}}
    \quad
    \subfigure[Corresponding B\'ezier extraction result (56 B\'ezier elements)]{
    \includegraphics[width=0.3\linewidth]{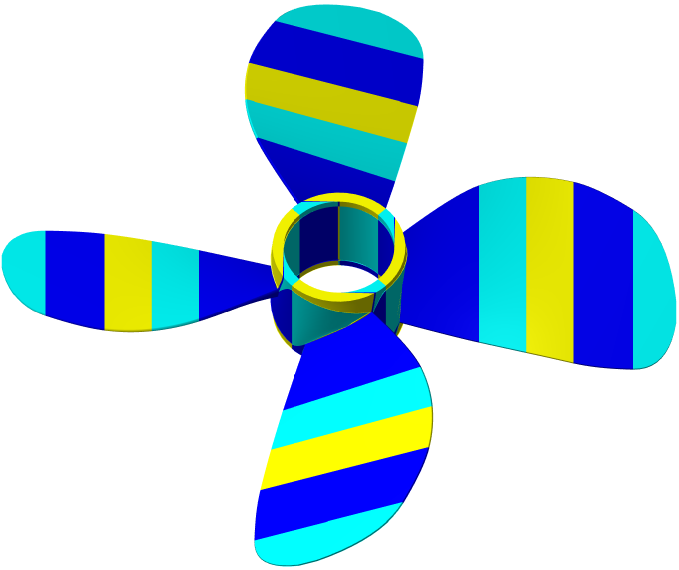}
    \label{fig5e_propeller_Bezier}}
    \quad
    \subfigure[Jacobian determinant distribution (time: 1.14 ms)]{
    \includegraphics[width=0.3\linewidth]{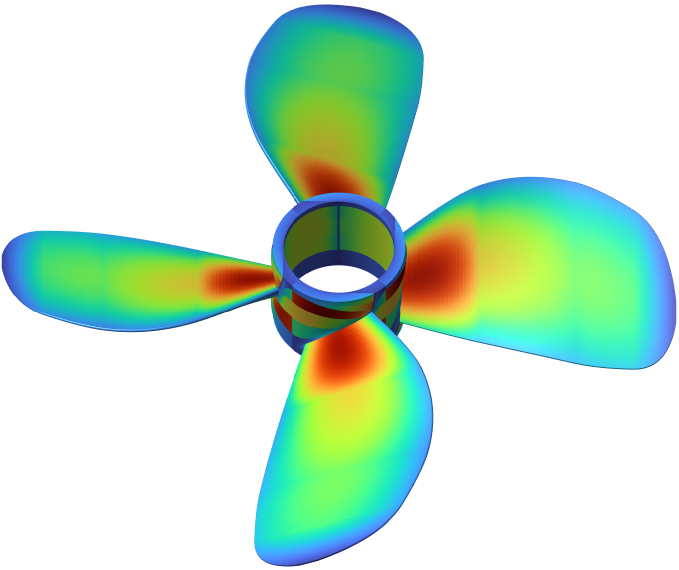}
    \label{fig5f_propeller_Jacobian}}
    \caption{Regularity verification of B-spline volume mappings for two representative geometries.  
    (a)–(c) Blade model: B-spline volume representation, B\'ezier extraction result, and the computed Jacobian determinant distribution.  
    (d)–(f) Propeller model: B-spline volume representation, B\'ezier extraction result, and the computed Jacobian determinant distribution.}
    \label{fig6:B-spline_solids}
\end{figure}

Figure~\ref{fig6:B-spline_solids} demonstrates the regularity verification process for two typical geometric structures in practical engineering applications. The first row shows a blade model constructed from a single B-spline volume, where Figure~\ref{fig5a_blade_BSpline} displays its original B-spline structure. After B\'ezier extraction, 200 B\'ezier elements are obtained, as shown in Figure~\ref{fig5b_blade_Bezier}, and the values of the Jacobian determinant for each element are computed using the proposed algorithm. The distribution of these values is shown in Figure~\ref{fig5c_blade_Jacobian}, with a total computation time of 3.91 milliseconds.

The second row presents a propeller model composed of 8 joined B-spline volumes. Figure~\ref{fig5d_propeller_BSpline} shows its B-spline volume representation, while Figure~\ref{fig5e_propeller_Bezier} displays the 56 extracted B\'ezier elements. The corresponding Jacobian determinant distribution is shown in Figure~\ref{fig5f_propeller_Jacobian}, requiring only 1.14 milliseconds of computation time. This experiment further demonstrates that the proposed method exhibits excellent scalability and efficiency, capable of handling regularity verification requirements for complex structures in practical applications.

In isogeometric analysis, the regularity of geometric mappings is a fundamental requirement for ensuring the convergence and accuracy of numerical methods~\cite{ji2022penalty}. The regularity verification framework proposed in this study exhibits broad applicability to B-spline volumes of arbitrary degree and topological complexity, making it a valuable tool for pre-simulation quality assessment of parametric domain construction.

\section{Conclusions}
\label{sec4:conclusions}

This paper focuses on the problem of regularity verification for parameter mapping of Coons volume mappings, and proposes a determination method based on explicit coefficient expression of Jacobian determinant. The approach is grounded in a systematic theoretical framework, supported by numerical validation and performance analysis. By exploiting the tensor-product structure of Bernstein basis functions inherent to B\'ezier parameterizations, the Jacobian determinant is expressed in a closed form, enabling rapid and robust regularity assessment for B-spline volume mappings.

Building upon the established work in regularity verification for two-dimensional NURBS parameterizations~\cite{ji2021constructing, li2025fast}, this paper extends the analysis to three-dimensional B-spline volumes. Utilizing B\'ezier extraction techniques~\cite{borden2011isogeometric}, arbitrary B-spline volumes are decomposed into local B\'ezier elements, allowing the proposed coefficient-based Jacobian conditions to be applied at the element level. This facilitates global regularity verification over complex volumetric domains. Numerical experiments across a range of polynomial degrees and parameterization configurations confirm the correctness and the computational efficiency of the proposed method. In addition, the proposed framework is particularly well-suited for automated regularity checking and quality assessment of parametric domains in geometric modeling, isogeometric analysis, and integrated CAD/CAE workflows. The associated theory offers a reliable foundation for geometry validation in the simulation of complex engineering structures.

Furthermore, the method proposed in this paper demonstrates excellent extensibility. {\color{red} Similar to the discussion in Section~\ref{sec303:Generalization}, due} to its inherent compatibility with B\'ezier representations, the proposed algorithm can be directly extended to more sophisticated parameterization structures such as T-spline volumes \cite{scott2011isogeometric}. This adaptability stems from the fact that T-splines can likewise be converted into assemblies of B\'ezier elements through extraction techniques, allowing our method to be effectively applied to regularity analysis and pre-simulation mesh quality control in T-spline parametric mappings. {\color{red}Finally, we note that the partitioning of a given object into injectively parametrised volumes represents an interesting and practically relevant direction for future work.}

\section*{Acknowledgements}
\label{sec:Acknowledgement}
This research was supported by the National Natural Science Foundation of China (Nos.~12301490, 12471358), the Youth Project of the Education Department of Liaoning Province (No.~JYTQN2023262), the Youth Talent Innovation Program of Dalian (No.~2023RQ088), and the China Postdoctoral Science Foundation (No.~2025M773071).


\bibliographystyle{elsarticle-num}
\bibliography{references}

@article{farin1999discrete,
  title={{Discrete Coons patches}},
  author={Farin, Gerald and Hansford, Dianne},
  journal={Computer Aided Geometric Design},
  volume={16},
  number={7},
  pages={691--700},
  year={1999},
  publisher={Elsevier}
}

@article{shi2010filling,
  title={{Filling n-sided regions with $G^{1}$ triangular Coons B-spline patches}},
  author={Shi, Kan Le and Yong, Jun Hai and Sun, Jia Guang and Paul, Jean Claude and Gu, He Jin},
  journal={The Visual Computer},
  volume={26},
  number={6},
  pages={791--800},
  year={2010},
  publisher={Springer}
}

@article{xu2013analysis,
  title={{Analysis-suitable volume parameterization of multi-block computational domain in isogeometric applications}},
  author={Xu, Gang and Mourrain, Bernard and Duvigneau, R{\'e}gis and Galligo, Andr{\'e}},
  journal={Computer-Aided Design},
  volume={45},
  number={2},
  pages={395--404},
  year={2013},
  publisher={Elsevier}
}

@book{cottrell2009isogeometric,
  title={{Isogeometric analysis: Toward integration of CAD and FEA}},
  author={Cottrell, J Austin and Hughes, Thomas JR and Bazilevs, Yuri},
  year={2009},
  publisher={John Wiley \& Sons}
}

@article{hughes2005isogeometric,
  title={{Isogeometric analysis: CAD, finite elements, NURBS, exact geometry and mesh refinement}},
  author={Hughes, Thomas JR and Cottrell, John A and Bazilevs, Yuri},
  journal={Computer Methods in Applied Mechanics and Engineering},
  volume={194},
  number={39-41},
  pages={4135--4195},
  year={2005},
  publisher={Elsevier}
}

@book{zhang2018geometric,
  title={{Geometric modeling and mesh generation from scanned images}},
  author={Zhang, Yongjie Jessica},
  year={2018},
  publisher={Chapman and Hall/CRC}
}

@article{pietroni2022hex,
  title={{Hex-mesh generation and processing: A survey}},
  author={Pietroni, Nico and Campen, Marcel and Sheffer, Alla and Cherchi, Gianmarco and Bommes, David and Gao, Xifeng and Scateni, Riccardo and Ledoux, Franck and Remacle, Jean and Livesu, Marco},
  journal={ACM Transactions on Graphics},
  volume={42},
  number={2},
  pages={1--44},
  year={2022},
  publisher={ACM New York, NY}
}

@article{xu2015efficient,
  title={{Efficient construction of multi-block volumetric spline parameterization by discrete mask method}},
  author={Xu, Gang and Mourrain, Bernard and Wu, Xiangyang and Chen, Long and Hui, Kin Chuen},
  journal={Journal of Computational and Applied Mathematics},
  volume={290},
  pages={589--597},
  year={2015},
  publisher={Elsevier}
}

@article{borden2011isogeometric,
  title={{Isogeometric finite element data structures based on B{\'e}zier extraction of NURBS}},
  author={Borden, Michael J and Scott, Michael A and Evans, John A and Hughes, Thomas JR},
  journal={International Journal for Numerical Methods in Engineering},
  volume={87},
  number={1-5},
  pages={15--47},
  year={2011},
  publisher={Wiley Online Library}
}

@article{scott2011isogeometric,
  title={{Isogeometric finite element data structures based on B{\'e}zier extraction of T-splines}},
  author={Scott, Michael A and Borden, Michael J and Verhoosel, Clemens V and Sederberg, Thomas W and Hughes, Thomas JR},
  journal={International Journal for Numerical Methods in Engineering},
  volume={88},
  number={2},
  pages={126--156},
  year={2011},
  publisher={Wiley Online Library}
}

@article{zhao2021geometric,
  title={{Geometric conditions for injectivity of 3D B{\'e}zier volumes}},
  author={Zhao, Xuan Yi and Li, Jing Gai and He, Shi Qi and Zhu, Chun Gang},
  journal={AIMS Math},
  volume={6},
  number={11},
  pages={11974--11988},
  year={2021}
}

@article{yu2021conditions,
  title={{Conditions for injectivity of toric volumes with arbitrary positive weights}},
  author={Yu, Ying Ying and Ji, Ye and Li, Jing Gai and Zhu, Chun Gang},
  journal={Computers \& Graphics},
  volume={97},
  pages={88--98},
  year={2021},
  publisher={Elsevier}
}

@article{ji2021constructing,
  title={{Constructing high-quality planar NURBS parameterization for isogeometric analysis by adjustment control points and weights}},
  author={Ji, Ye and Yu, Ying Ying and Wang, Meng Yun and Zhu, Chun Gang},
  journal={Journal of Computational and Applied Mathematics},
  volume={396},
  pages={113615},
  year={2021},
  publisher={Elsevier}
}

@article{randrianarivony2025deciding,
  title={{Deciding on the Regularity of a Planar Coons Map}},
  author={Randrianarivony, Maharavo and Brunnett, Guido},
  journal={Mathematical and Computational Applications},
  volume={30},
  number={2},
  pages={30},
  year={2025},
  publisher={MDPI}
}

@article{yu2023sufficient,
  title={{Sufficient condition for injectivity of NURBS volumes by tangent cones}},
  author={Yu, Ying Ying and Ji, Ye and Zhu, Chun Gang},
  journal={Journal of Computational and Applied Mathematics},
  volume={432},
  pages={115303},
  year={2023},
  publisher={Elsevier}
}

@article{li2025fast,
  title={{Fast Determination and Computation of Self-intersections for NURBS Surfaces}},
  author={Li, Kai and Jia, Xiaohong and Chen, Falai},
  journal={ACM Transactions on Graphics},
  volume={44},
  number={2},
  pages={Article No.: 18, Pages 1--16},
  year={2025},
  publisher={ACM New York, NY}
}

@book{farin2014curves,
  title={{Curves and Surfaces for Computer-Aided Geometric Design: A Practical Guide}},
  author={Farin, Gerald},
  year={2014},
  publisher={Elsevier}
}

@article{ji2022penalty,
  title={{Penalty function-based volumetric parameterization method for isogeometric analysis}},
  author={Ji, Ye and Wang, Meng Yun and Pan, Mao Dong and Zhang, Yi and Zhu, Chun Gang},
  journal={Computer Aided Geometric Design},
  volume={94},
  pages={102081},
  year={2022},
  publisher={Elsevier}
}

@article{gain2002preventing,
  title={Preventing self-intersection under free-form deformation},
  author={Gain, James E. and Dodgson, Neil A.},
  journal={IEEE Transactions on Visualization and Computer Graphics},
  volume={7},
  number={4},
  pages={289--298},
  year={2002},
  publisher={IEEE}
}

@article{kosinka2010injective,
  title={Injective shape deformations using cube-like cages},
  author={Kosinka, Jiri and Barton, Michael},
  journal={Computer-Aided Design and Applications},
  volume={7},
  number={3},
  pages={309--318},
  year={2010},
  publisher={Taylor \& Francis}
}






\end{document}